\documentclass{problemset}
\studname{Rudra Kamat}
\studmail{rrk48@cornell.edu}
\coursename{\textbf{PHYS 6554:}}
\duedate{\today}
\hwNo{0}
\newtheorem{rem}{Remark}
\begin{document}
\maketitle

\begin{abstract}
Williamson's theorem is well known for symmetric matrices. In this paper, we state and re-derive some of the cases of Williamson's theorem for symmetric positive-semi definite matrices and symmetric matrices having negative index 1, due to H\"{o}rmander. We prove theorems that guarantee conditions under which two symmetric positive-definite matrices can be simultaneously diagonalized in the sense of Williamson's theorem and their corollaries. Finally, we provide an application of this result to physical systems and another connecting the decompositions for the degenerate and non-degenerate cases, involving phase space constraints that we later apply to phase space cylinders and ellipsoids via symplectic capacities.
\end{abstract}

\section{Introduction}
Let $V$ be a $2n$-dimensional real vector space equipped with a symplectic form $\omega$. Such a $(V,\omega)$ is called a \textit{symplectic vector space}.
Suppose that this symplectic vector space is also equipped with an inner product $\langle\cdot,\cdot\rangle$.
Let $\mathbb{M}(V)$ denote the set of real linear automorphisms on $V$. An element $\op{M}\in\mathbb{M}(V)$ is called symmetric if it is symmetric with respect to the inner product i.e. $\langle v,\op{M}w\rangle=\langle \op{M}v,w\rangle$. \\
Given a symmetric operator $\op{M}\in\mathbb{M}(V)$, by $\op{M}>0$ and $\op{M}\geq 0$ we imply that $\op{M}$ is positive-definite and positive-semi definite respectively.
Let $\op{I}$ denote the identity linear map on $V$.
\begin{rem}\label{re1}
Define $\op{J_{2n}}=\begin{bmatrix}\op{0_n} & \op{I_n}\\ -\op{I_n} & \op{0_n} \end{bmatrix}\in\mathbb{M}(\R^{2n})$, \\where $\op{I_n}$ and $\op{0_n}$ are the $n\times n$ identity and zero matrix respectively.

The anti-symmetric bilinear form $\omega(\cdot,\cdot)$ on $\R^{2n}$ given by
\begin{equation}
    \omega(x,y)\coloneqq\sum_{i=1}^{n}(x_iy_{n+i}-x_{n+i}y_i)
\end{equation}
is called the \textit{symplectic inner product} on $\R^{2n}$. This can also be expressed in terms of the Euclidean inner product as $\omega(x,y)=\langle x,\op{J_{2n}} y\rangle=x^T\op{J_{2n}}y$.
\end{rem} 

Let us define the \textit{orthogonal group} on $V$ as follows:
$$\op{O}(V)\coloneqq\{\op{U}\in\mathbb{M}(V)\;|\;\langle\op{U}x,\op{U}y\rangle=\langle x,y\rangle\}$$ 
An element of $\op{U}\in \op{O}(V)$ is called a \textit{orthogonal} linear automorphism on $V$. We can see from the above definition that a orthogonal linear automorphism on $V$ is such that it preserves the inner product on $V$.\\
Similarly, we can define the \textit{symplectic group} on $V$ as follows:
$$\op{Sp}(V)\coloneqq\{\op{S}\in\mathbb{M}(V)\;|\;\omega(\op{S}x,\op{S}y)=\omega(x,y)\}$$ 
An element of $\op{S}\in \op{Sp}(V)$ is called a \textit{symplectic} linear automorphism on $V$. We can see from the above definition that a symplectic linear automorphism on $V$ is such that it preserves the symplectic product on $V$. Elements of $\mathbb{M}(V)$ that are both symplectic as well orthogonal automorphisms are called \textit{orthosymplectic} and they form the \textit{orthosymplectic group} on $V$, $\op{OrSp}(V)$.

The term \textit{symplectic}, introduced by H. Weyl \cite{wey}, is a calque of \textit{complex}; previously, the symplectic group had been called the \textit{line complex group}. \textit{Complex} comes from the Latin \textit{com-plexus}, meaning \textit{braided together} (co- + plexus), while symplectic comes from the corresponding Greek \textit{sym-plektikos} $(\sigma\upsilon\mu\pi\lambda\epsilon\kappa\tau\iota\kappa$\'{o}$\zeta)$; in both cases the stem comes from the Indo-European root $^*$\textit{ple}$\invbreve{\textit{k}}$ -.\\The name reflects the deep connection between complex and symplectic structures.

The reason for this is because we can define a linear map $\op{J}$, called a \textit{complex structure} from $V$ to itself, satisfying $\op{J}^2=-\op{I}$. This complex structure serves the same role on a real vector space that the imaginary unit $i$ does on the real numbers. For example if $z=a+ib$ is a complex number and $v$ a vector in $V$, then we can define the complex scalar multiple $zv=av+b\op{J}v$. This makes the real vector space $V$ into a complex vector space.\\
In Remark \ref{re1}, $\op{J}_{2n}$ is called the standard complex structure on $\R^{2n}$. 

J. Williamson first discovered, in 1936, a result involving the decomposition of a symmetric matrix by a symplectic matrix in \cite{Will}\\
However, only the special case of the result, when the symmetric matrix is positive-definite, is widely known in literature as \textit{Williamson's theorem}.\\
D.M. Galin \cite{gal}, in 1982, and then A.D. Bryuno \cite{bry}, in 1988, only restated but did not re-derive Williamson's general result in their individual works.\\
Only L. Hörmander \cite{Horm} stated and proved Williamson's theorem for the cases when the symmetric matrix is positive semi-definite and when it is hyperbolic (with one negative index).\\
One possible explanation as to why only the positive-definite case of Williamson's theorem is given in the final remarks section of F. Nicacio's work \cite{nic} where it is stated that the reason the positive-definite case is so practical and celebrated can be attributed to the fact that the normal form attained after decomposition is diagonal for this case and therefore easier to work with. Also, from an application standpoint, not many of the other cases form thermodynamically stable systems unlike the positive-definite case.

In the last decade, although there were some works on the topic before 2014, it was all scattered, until T. Jain and R. Bhatia released a paper in 2015 \cite{bhatia2015} giving several interesting connections between symplectic eigenvalues and eigenvalues, and symplectic matrices and orthogonal matrices. Following that work, T. Jain and H.K. Mishra \cite{jm} provided a comprehensive study on qualitative and quantitative properties of symplectic eigenvalues. These may be regarded as one of the first papers, in recent times, to treat symplectic eigenvalues and symplectic matrices in detail. Following these works, there have been numerous studies on the topic (see \cite{mishra2020first}, \cite{jain2021sums}, \cite{bhatia2021variational} and the references therein).\\
The most recent works in symplectic eigenvalues are by H.K. Mishra \cite{mishra2024equality}, G. Babu and H.K. Mishra \cite{mishra2023}. The most recent works in symplectic decomposition are by H.K. Mishra \cite{mishra2024orthosymplectic}, Son et al. \cite{willdeg1} and N.T. Son and T. Stykel \cite{son}.


\section{Background and notation}\label{back}
Let us now build some notation, terminology and results that we will need going forward.

A pair of vectors $\{u,v\}$ is called \textit{symplectically normalized} if $\omega(u,v)=1$. Two pairs of vectors $\{u_1,v_1\}$ and $\{u_2,v_2\}$ are called \textit{symplectically orthogonal} if 
$$\omega(u_i,v_j)=\omega(u_i,u_j)=\omega(v_i,v_j)=0$$
for all $i\neq j$; $i,j=1,2$. A subset $\{u_1,\dots,u_m,v_1,\dots,v_m\}$ of $V$ is called a \textit{symplectically orthogonal (orthonormal)} set if the pairs of vectors $\{u_j,v_j\}$ are mutually symplectically orthogonal (and normalized). If $m=n$, then the symplectically orthonormal set is called a \textit{symplectic basis} of $V$.

\vspace{0.4cm}
Let $W$ be a vector subspace of the symplectic vector space $(V,\omega)$ then the \textit{symplectic complement} of $W$ is defined as:
$$W^\omega\coloneqq\{x\in V\;|\;\omega(x,y)=0;\;\forall y\in W\}$$
It is easy to see that $W^\omega$ is also a subspace of $V$. Depending on the relation between $W$ and $W^\omega$, we have the following classification:
\begin{enumerate}
    \item If $W\subset W^\omega$, then $W$ is an \textit{isotropic subspace} of $V$
    \item If $W^\omega\subset W$, then $W$ is a \textit{coisotropic subspace} of $V$
    \item If $W\cap W^\omega=\{0\}$, then $W$ is a \textit{symplectic subspace} of $V$
    \item If $W=W^\omega$, then $W$ is a \textit{Lagrangian subspace} of $V$
\end{enumerate}
Analogous to Euclidean vector spaces, we have a the following dimension theorem for our symplectic vector space $(V,\omega)$:
$$dim(V)=2n=dim(W)+dim(W^\omega)$$
We can also define a symplectic subspace $W$ of $V$ by requiring that for every $x\in W\setminus\{0\}$, there exists a $y\in W$ such that $\omega(x,y)\neq 0$. \\
This helps us show that we can always construct a symplectic basis for a symplectic subspace analogous to the \textit{Gram-Schmidt process} for Euclidean vector spaces  and conversely, a given subspace is symplectic if it owns a symplectic basis (see \cite{Birk}, Theorem 1.15).

\vspace{0.4cm}
Let $(V,\omega)$ be a $2n$-dimensional symplectic vector space equipped with a positive-definite quadratic form $f$.
To the quadratic form $f$ corresponds a unique symmetric bilinear form $A$ on $V$ such that $f(v)=A(v,v)$ for all $v$; it is given by the usual formula $A(v,w)=\frac{1}{2}(f(v+w)-f(v)-f(w))$. Since $\omega$ is non-degenerate, there exists a unique linear map $\op{F}:V\to V$ such that 
$$A(v,w)=\omega(v,\op{F}w)$$
for all $v,w\in V$. It follows from the symmetry of $A$ and the anti-symmetry that $\omega$ that $\omega(v,\op{F}w)=-\omega(\op{F}v,w)$ for all $v,w\in V$. That means, $\op{F}$ is anti-symmetric with respect to $\omega$ i.e. $\op{F}$ is an element of the Lie algebra of the group of symplectic linear transformations of $V$.
\begin{rem}\label{re1} 
Let $V=\R^{2n}$ with its standard symplectic form, which is given by $\omega(v,w)=v^T\op{J_{2n}}w$. Any positive-definite bilinear form $A$ can be written as $A(v,w)=v^T\op{M_A} w$, where $\op{M_A}$ is a positive-definite symmetric $2n\times 2n$ matrix. The matrix of the linear map $\op{F}$ is given by $\op{J_{2n}M_A}$.
\end{rem}
Let $V^{\C}$ be the complexification of $V$ and for $\lambda\in\C$, let $V_{\lambda}$ be the subspace of $V^{\C}$ consisting of all $v\in V^{\C}$ with $\op{F}v=\lambda v$. The symplectic form $\omega$ extends to a complex symplectic form on $V^{\C}$, which we shall denote by $\omega$. The bilinear form $A$ extends to a complex bilinear form on $V^{\C}$, which we denote also by $A$. The real linear map $\op{F}:V\to V$ extends to a complex linear map $V^{\C}\to V^{\C}$ which we also denote by $\op{F}$.\\
Note that if $\{a_1,\dots,a_n,b_1,\dots,b_n\}$ forms a symplectic basis of $V$ then, with the appropriate normalization, 
$\{(a_1+ib_1),\dots,(a_n+ib_n),(a_1-ib_1),\dots,(a_n-ib_n)\}$ also forms a symplectic basis of the complexification $V^{\C}$.

\begin{lemma}\label{lem1}
Let $v\in V_\lambda$ and $w\in V_\mu$. If $\lambda+\mu\neq 0$, then $\omega(v,w)=0$.
\end{lemma}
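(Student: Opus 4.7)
The plan is to exploit the one established structural fact about $\op{F}$, namely that it is anti-symmetric with respect to the (complexified) symplectic form: $\omega(\op{F}x,y) = -\omega(x,\op{F}y)$ for all $x,y \in V^{\C}$. This identity is already in hand from the paragraph preceding Remark \ref{re1}, and it extends to $V^{\C}$ by complex-bilinearity since $\omega$ and $\op{F}$ both extend complex-linearly. Once that identity is available, the lemma is essentially a one-line consequence of the eigenvalue hypotheses $\op{F}v = \lambda v$ and $\op{F}w = \mu w$.

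Concretely, I would evaluate $\omega(\op{F}v,w)$ in two ways. On the one hand, substituting $\op{F}v = \lambda v$ gives $\omega(\op{F}v,w) = \lambda\,\omega(v,w)$. On the other hand, using anti-symmetry of $\op{F}$ followed by $\op{F}w = \mu w$ gives $\omega(\op{F}v,w) = -\omega(v,\op{F}w) = -\mu\,\omega(v,w)$. Equating the two expressions yields $(\lambda+\mu)\,\omega(v,w) = 0$, and the hypothesis $\lambda+\mu \neq 0$ forces $\omega(v,w) = 0$.

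There is no real obstacle; the only thing worth flagging is the implicit extension of the anti-symmetry identity from $V$ to $V^{\C}$, which I would justify in a single sentence by noting that both sides of $\omega(\op{F}\cdot,\cdot) + \omega(\cdot,\op{F}\cdot) = 0$ are complex-bilinear and agree on the real subspace $V \times V$, hence vanish on all of $V^{\C}\times V^{\C}$. This is the classical argument showing that eigenspaces of an $\omega$-antisymmetric operator corresponding to eigenvalues $\lambda,\mu$ with $\lambda+\mu \neq 0$ are symplectically orthogonal, completely parallel to the fact that eigenspaces of a symmetric operator for distinct eigenvalues are orthogonal.
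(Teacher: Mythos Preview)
Your argument is correct and is in fact the standard one-line proof: compute $\omega(\op{F}v,w)$ two ways using $\op{F}v=\lambda v$, $\op{F}w=\mu w$, and the $\omega$-anti-symmetry of $\op{F}$, obtaining $(\lambda+\mu)\,\omega(v,w)=0$. The remark about extending the anti-symmetry identity to $V^{\C}$ by complex-bilinearity is also fine and matches what the paper tacitly assumes.

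The paper's proof takes a slightly different, more roundabout route: it uses that $\op{F}+\mu\op{I}$ is bijective on $V_\lambda$ (since $\lambda+\mu\neq 0$) to write $v=(\op{F}+\mu\op{I})u$ for some $u\in V_\lambda$, and then moves the operator across $\omega$ via anti-symmetry to get $\omega(v,w)=\omega(u,(-\op{F}+\mu\op{I})w)=\omega(u,0)=0$. For genuine eigenspaces this is overkill---$u$ is just $(\lambda+\mu)^{-1}v$ and your direct computation is cleaner. The payoff of the paper's formulation is that it is a template for the generalized-eigenspace version (Lemma~\ref{lem9}), where one needs $(\op{F}+\mu\op{I})^N$ with $N$ large to annihilate $w$; your eigenvalue-substitution argument does not extend to that setting without modification, whereas the paper's ``invert $\op{F}+\mu\op{I}$ on one side, kill on the other'' mechanism does.
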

\begin{proof}
Let $\op{I}$ be the identity map of $V$. The linear map $\op{F}+\mu\op{I}$ leaves the subspace $V_\lambda$ invariant. Since $-\mu\neq\lambda$, then $\op{F}+\mu\op{I}$, considered as a map from $V_\lambda$ to $V_\lambda$, is bijective. Therefore, there exists a unique $u\in V_\lambda$ such that $(\op{F}+\mu\op{I})u=v$. Using anti-symmetry of $\op{F}$ and $(\op{F}-\mu\op{I})w=0$ gives \\
$\omega(v,w)=\omega((\op{F}+\mu\op{I})u,w)=\omega=\omega(u,(-\op{F}+\mu\op{I})w)=\omega(u,0)=0$
\end{proof}

\begin{lemma}\label{lem2}
All eigenvalues of $\op{F}$ are non-zero and purely imaginary.
\end{lemma}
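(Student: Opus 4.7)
The plan is to handle the two statements separately, exploiting the defining identity $A(v,w)=\omega(v,\op{F}w)$ together with positive-definiteness of $A$ and $\omega$-anti-symmetry of $\op{F}$. Lemma \ref{lem1} will do the heavy lifting for the ``purely imaginary'' half.

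First, to rule out $0$ as an eigenvalue: if $\op{F}$ had $0$ as a (necessarily real) eigenvalue, then since $\op{F}$ is a real linear map there would exist a nonzero real vector $v\in V$ with $\op{F}v=0$. Plugging into the defining identity gives $A(v,v)=\omega(v,\op{F}v)=\omega(v,0)=0$, contradicting positive-definiteness of $A$ on $V$. Hence $\det\op{F}\neq 0$ and $0\notin\operatorname{spec}(\op{F})$.

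Next, for the purely imaginary part, I would let $\lambda\in\operatorname{spec}(\op{F})$ and pick $0\neq v\in V_\lambda\subseteq V^{\C}$. Writing $v=x+iy$ with $x,y\in V$, complex conjugation applied to $\op{F}v=\lambda v$ gives $\op{F}\bar v=\bar\lambda \bar v$, so $\bar v\in V_{\bar\lambda}$. The key computation is to evaluate $A(v,\bar v)$ in two different ways. On one hand, using the definition of $\op{F}$ and $\op{F}\bar v=\bar\lambda\bar v$,
\[
A(v,\bar v)=\omega(v,\op{F}\bar v)=\bar\lambda\,\omega(v,\bar v).
\]
On the other hand, expanding by complex bilinearity and using the symmetry of $A$,
\[
A(v,\bar v)=A(x+iy,x-iy)=A(x,x)+A(y,y),
\]
which is strictly positive because $v\neq 0$ forces $(x,y)\neq(0,0)$ and $A$ is positive-definite on the real space $V$. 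Combining the two expressions, $\bar\lambda\,\omega(v,\bar v)>0$, so in particular $\omega(v,\bar v)\neq 0$. But Lemma \ref{lem1}, applied to $v\in V_\lambda$ and $\bar v\in V_{\bar\lambda}$, states that $\omega(v,\bar v)=0$ whenever $\lambda+\bar\lambda\neq 0$. The contrapositive forces $\lambda+\bar\lambda=0$, i.e.\ $\operatorname{Re}\lambda=0$, so $\lambda$ is purely imaginary.

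The only step that requires real care is the bookkeeping in the second part: one needs the complexified form $A$ on $V^{\C}$ to be \emph{complex bilinear} (not Hermitian), so that the off-diagonal terms $-iA(x,y)+iA(y,x)$ genuinely cancel by symmetry, leaving the manifestly positive real quantity $A(x,x)+A(y,y)$. This is what ultimately drives the contradiction with Lemma \ref{lem1} and pins the spectrum onto the imaginary axis.
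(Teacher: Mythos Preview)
Your proof is correct and takes a genuinely different route from the paper's. For the nonzero part you argue essentially the same way. For the purely imaginary part, the paper proceeds by contradiction: assuming $\Re\lambda\neq 0$, it forms the real subspace $W_\lambda=\Re(V_\lambda)$, checks $W_\lambda\neq\{0\}$, and then shows $A$ vanishes identically on $W_\lambda$ by writing $A(x,y)=\omega(x,\op{F}y)$ as a sum of four terms $\omega(u,v)$, $\omega(u,\bar v)$, $\omega(\bar u,v)$, $\omega(\bar u,\bar v)$ and killing each with Lemma~\ref{lem1}. This contradicts positive-definiteness of $f$. Your argument is more direct: you compute $A(v,\bar v)$ for a single eigenvector $v$, identify it simultaneously as $\bar\lambda\,\omega(v,\bar v)$ and as the manifestly positive $A(x,x)+A(y,y)$, and then invoke the contrapositive of Lemma~\ref{lem1} once to force $\lambda+\bar\lambda=0$. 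What your approach buys is economy: one eigenvector, one application of Lemma~\ref{lem1}, and no need to introduce or analyze the auxiliary real subspace $W_\lambda$. What the paper's approach buys is a template that it reuses later (Lemma~\ref{lem7}) for the positive-\emph{semi}definite case via generalized eigenspaces, where the conclusion is weaker (one only gets $W_{(\lambda)}\subseteq\operatorname{Ker}\op{F}$) and the subspace argument is the natural vehicle. Your remark about needing the complex-bilinear (not Hermitian) extension of $A$ is exactly the right caveat; the paper sets this convention explicitly, and the same cancellation $-iA(x,y)+iA(y,x)=0$ is used in its proof of Lemma~\ref{lem3}.
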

\begin{proof}
Since $f$ is positive-definite, $\op{F}$ is invertible and therefore has no zero eigenvalues. Suppose there existed $\lambda\in\C$ with $\Re \lambda\neq 0$ and $V_\lambda\neq \{0\}$. For $v\in V^{\C}$, set $\Re v=\frac{1}{2}(v+\overline{v})\in V$. Then 
$$W_\lambda=\Re(V_\lambda)=\{\Re v\;|\;v\in V_\lambda\}$$
is a linear subspace of $V$. We claim that \begin{enumerate}\item\label{i1}$W_\lambda\neq \{0\};$\item\label{i2}$A(x,y)=0$ for all $x,y\in W_\lambda$ \end{enumerate}
Assume that $\Re v=0$ for all $v\in V_\lambda$; then $v+\overline{v}=0$ for all $v\in V_\lambda$. But if $v\in V_\lambda$, then $iv\in V_\lambda$, so $iv+\overline{iv}=v-\overline{v}=0$ and hence $v=0$. This contradicts the assumption $V_\lambda\neq \{0\}$, thus proving \ref{i1}. Now let $x, \op{F}y\in W_\lambda$. Writing $x=\Re u$ and $\op{F}y=\Re v$ with $u, v\in V_\lambda$, we find
\begin{align*}
A(x,y)=\omega(x,\op{F}y)&=\frac{1}{4}\omega(u+\overline{u},v+\overline{v})\\
&=\frac{1}{4}\omega(u, v)+\frac{1}{4}\omega(u,\overline{v})+\frac{1}{4}\omega(\overline{u}, v)+\frac{1}{4}\omega(\overline{u},\overline{v})
\end{align*}
We have $\overline{u}, \overline{v}\in V_{\bar{\lambda}}$ because $\op{F}$ is a real linear map. Since $\Re \lambda\neq 0$, we have $\lambda+\bar{\lambda}\neq 0$, so 
$\omega (u, \overline{v})=\omega(\overline{u}, v)=0$ by Lemma \ref{lem1}. Similarly, $\omega(u, v)=\omega(\overline{u}, \overline{v})=0$ by Lemma \ref{lem1}, because $\lambda \neq 0$. Thus, $A(x, y)=0$, proving \ref{i2}. The assertions $\ref{i1}-\ref{i2}$ contradict the positive definiteness of $f$, thus proving that $\Re \lambda = 0$.
\end{proof}
\begin{lemma}\label{lem3}
Let $v\in V_\lambda$ be an eigenvector of $F$ for eigenvalue $\lambda=i\mu$ with $\mu>0$. Write $v=x+iy$ with $x, y\in V$. Then $\op{F}x=-\mu y$, $\op{F}y=\mu x$ and $\omega(x,y)<0$.
\end{lemma}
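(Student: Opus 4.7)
The first two identities should come out almost immediately from comparing real and imaginary parts. Since $\op{F}$ is a real linear map and $v=x+iy$ with $x,y\in V$, we have $\op{F}v=\op{F}x+i\op{F}y$. On the other hand, $\op{F}v=\lambda v=i\mu(x+iy)=-\mu y+i\mu x$. Matching the real and imaginary components of $V\subset V^{\mathbb{C}}$ yields $\op{F}x=-\mu y$ and $\op{F}y=\mu x$. This step is routine.

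Before turning to the inequality, I would pause to verify that neither $x$ nor $y$ can be zero. If $y=0$, the first identity gives $\op{F}x=0$, but Lemma \ref{lem2} says $\op{F}$ has no zero eigenvalues, forcing $x=0$ and hence $v=0$, contradicting that $v$ is an eigenvector; the case $x=0$ is symmetric. So both $x$ and $y$ are nonzero real vectors, and by positive-definiteness of $f$ we get $A(x,x)>0$ and $A(y,y)>0$.

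The main step is to connect $\omega(x,y)$ to these positive quantities via the defining relation $A(\cdot,\cdot)=\omega(\cdot,\op{F}\cdot)$. Using the identities just derived,
\begin{align*}
A(x,x)&=\omega(x,\op{F}x)=\omega(x,-\mu y)=-\mu\,\omega(x,y),\\
A(y,y)&=\omega(y,\op{F}y)=\omega(y,\mu x)=-\mu\,\omega(x,y).
\end{align*}
Adding these gives $A(x,x)+A(y,y)=-2\mu\,\omega(x,y)$. Since the left-hand side is strictly positive and $\mu>0$, we conclude $\omega(x,y)<0$.

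I do not anticipate any real obstacle; the only subtlety is remembering to rule out $x=0$ or $y=0$ before invoking strict positive-definiteness, since otherwise the argument would only give $\omega(x,y)\leq 0$. That small case check, handled using Lemma \ref{lem2} (invertibility of $\op{F}$), is the one place where care is needed.
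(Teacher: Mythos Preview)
Your proof is correct and follows essentially the same approach as the paper: both derive $\op{F}x=-\mu y$, $\op{F}y=\mu x$ by matching real and imaginary parts, and both establish the identity $A(x,x)+A(y,y)=-2\mu\,\omega(x,y)$ to conclude. The paper reaches this identity by expanding the complex expression $A(\bar v,v)$ in two ways, whereas you compute $A(x,x)$ and $A(y,y)$ directly from $A(\cdot,\cdot)=\omega(\cdot,\op{F}\cdot)$; this is a cosmetic difference. Your nonvanishing check is slightly stronger than needed---the paper only observes that $x$ and $y$ are not both zero---but it does no harm.
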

\begin{proof}
Substituting $v=x+iy$ into $\op{F}v=i\mu v$ and equating real and imaginary parts gives $\op{F}x=-\mu y$ and $\op{F}y=\mu x$. On one hand,
\begin{align*}
A(\overline{v}, v) = A(x-iy, x+iy) = A(x, x) - A(iy, iy) = A(x, x) + A(y, y)
\end{align*}
and on the other hand
\begin{align*}
A(\overline{v}, v) &=\omega(\overline{v}, \op{F}v) =\omega(\overline{v}, i\mu v) =i\mu \omega(\overline{v}, v)\\
&=i\mu\omega(x-iy, x+iy)=-2\mu \omega(x,y)
\end{align*}
so $A(x,x) + A(y, y) = -2\mu\omega(x, y)$. The vectors $x$ and $y$ are not both $0$ (else $v$ would be 0), so $A(x,x)+A(y, y)>0$ and hence, $\omega(x, y)<0$.
\end{proof}

Let $\{\pm i\mu_1,\pm i\mu_2,\dots,\pm i\mu_n\}$ be a list of all eigenvalues of $\op{F}$, where $\mu_j>0$ and where each eigenvalue is repeated according to its multiplicity.

\begin{theorem}[Williamson \cite{Will}]\label{will}
There exists a symplectic basis $\{x_1, x_2, \dots, x_n, y_1, y_2, \dots, y_n\}$ of $V$ such that $\op{F}x_j=-\mu_j y_j$ and $\op{F}y_j=\mu_j x_j$. For $v=\sum_{j=1}^{n}(s_j x_j + t_j y_j) \in V$, we have $f(v)=\sum_{j=1}^{n}\mu_j (s_j^2 + t_j^2)$, where $\mu_j>0$.
\end{theorem}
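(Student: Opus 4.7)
The plan is to proceed by induction on $n$ (with $\dim V = 2n$), peeling off a single $\op{F}$-invariant symplectic $2$-plane at each step. The base case $n=0$ is vacuous. For the inductive step, I would use Lemma \ref{lem2} to select an eigenvalue $i\mu_n$ of $\op{F}$ with $\mu_n>0$, and pick a nonzero eigenvector $v = a + ib \in V^{\C}$. By Lemma \ref{lem3}, the real and imaginary parts satisfy $\op{F}a = -\mu_n b$, $\op{F}b = \mu_n a$, and $\omega(a,b)<0$. After rescaling by $c = \sqrt{|\omega(a,b)|}$ and adjusting signs if needed, I would obtain a symplectically normalized pair $\{x_n, y_n\}$ (so $\omega(x_n, y_n) = 1$) that continues to satisfy the intertwining relations $\op{F}x_n = -\mu_n y_n$ and $\op{F}y_n = \mu_n x_n$.

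Setting $W_n = \mathrm{span}\{x_n, y_n\}$, this is a $2$-dimensional symplectic subspace (as $\omega(x_n, y_n) = 1 \neq 0$) and is $\op{F}$-invariant by construction. The crucial observation is that the symplectic complement $W_n^\omega$ is also $\op{F}$-invariant: for any $v \in W_n^\omega$ and $w \in W_n$, the $\omega$-anti-symmetry of $\op{F}$ gives
\[
\omega(\op{F}v, w) = -\omega(v, \op{F}w) = 0,
\]
since $\op{F}w \in W_n$ while $v \in W_n^\omega$. Together with the dimension formula and the fact that $W_n \cap W_n^\omega = \{0\}$, this yields $V = W_n \oplus W_n^\omega$ as symplectic vector spaces.

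The restrictions $\omega|_{W_n^\omega}$, $A|_{W_n^\omega}$, and $\op{F}|_{W_n^\omega}$ then satisfy the same compatibility $A(v,w) = \omega(v, \op{F}w)$ on $W_n^\omega$, with $A|_{W_n^\omega}$ still positive-definite and $\op{F}|_{W_n^\omega}$ having eigenvalues $\{\pm i\mu_1, \dots, \pm i\mu_{n-1}\}$ (the original list with one $\pm i\mu_n$ pair removed). Invoking the inductive hypothesis on the $2(n-1)$-dimensional symplectic vector space $W_n^\omega$ produces a symplectic basis $\{x_1, \dots, x_{n-1}, y_1, \dots, y_{n-1}\}$ of $W_n^\omega$; concatenating with $\{x_n, y_n\}$ gives the desired symplectic basis of $V$. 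The quadratic form identity then follows by direct expansion of $f(v) = A(v, v) = \omega(v, \op{F}v)$ using the $\op{F}$-action on basis vectors together with the standard pairings $\omega(x_i, y_j) = \delta_{ij}$, $\omega(x_i, x_j) = \omega(y_i, y_j) = 0$.

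The main obstacle, as I see it, is verifying that the symplectic complement $W_n^\omega$ is $\op{F}$-invariant; this is the step that genuinely exploits $\op{F}$'s $\omega$-anti-symmetry and is what makes the induction go through. A secondary technicality is the sign bookkeeping in the normalization of $\{x_n, y_n\}$: Lemma \ref{lem3} only yields $\omega(a, b) < 0$, so achieving $\omega(x_n, y_n) = +1$ while preserving the stated action of $\op{F}$ requires care, typically via a swap or negation among the candidates $\pm a, \pm b$ before rescaling.
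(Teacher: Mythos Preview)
Your proposal is correct and follows essentially the same inductive strategy as the paper: use Lemma~\ref{lem3} to carve out an $\op{F}$-invariant symplectic plane, invoke the $\omega$-anti-symmetry of $\op{F}$ to see that its symplectic complement is again $\op{F}$-invariant, and recurse on that complement. The paper additionally records (but does not really use) that $W^\omega$ coincides with the $A$-orthogonal complement $W^A$, and handles your flagged sign technicality by the bare rescaling $x_1 = cx$, $y_1 = cy$ with $c = \omega(y,x)^{-1/2}$.
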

\begin{proof}
Let $v\in V^{\C}$, $x, y\in V$ be as in Lemma \ref{lem3} and let $W=span\{x, y\}$. Then $W$ is an $\op{F}$-invariant symplectic plane in $V$ with symplectic basis $\{x_1=c x, y_1=c y\}$, where $c=\omega(y, x)^{-1/2}$. Therefore, the skew-orthogonal complement of $W$, 
$$W^\omega=\{v\in V\;|\; \omega(v, x)=\omega(v, y)=0\}$$
is an $\op{F}$-invariant subspace of $V$, and $V=W\oplus W^\omega$ of the symplectic subspaces $W$ and $W^\omega$. Moreover, since $A(v, w)=\omega (v, \op{F}w)$, we have $\omega(v, x)=\omega(v, y)=0$ iff $A(v, x)= A(v, y)=0$. Thus $W^\omega$ is equal to the $A$-orthogonal complement of $W$,
$$W^\omega=W^A=\{v\in V\;|\;A(v, x)=A(v, y)=0\}$$
By induction on the dimension of $V$, we may assume that Williamson's theorem is true for the $2n-2$ dimensional symplectic vector space $W^\omega$, which gives us a symplectic basis $\{x_2, x_3, \dots, x_n, y_2, y_3, \dots, y_n\}$ of $W^\omega$ such that $\op{F}x_j=-\mu_j y_j$ and $\op{F}y_j=\mu_j x_j$ for $j\geq 2$, where the number $\pm i\mu_j$ are the eigenvalues of the $\op{F}$ restricted to $W^\omega$. Then $\{x_1, x_2, \dots, x_n, y_1, y_2, \dots, y_n\}$ is a symplectic basis of $V$ such that $\op{F}x_j=-\mu_j y_j$ and $\op{F}y_j=\mu_j x_j$ for $j\geq 1$, where $\mu_1=\mu$.
For $v=\sum_{j=1}^{n}(s_j x_j +t_j y_j)\in V$, we obtain
\begin{align*}
f(v)&=A(v, v)= \omega(v, \op{F} v)\\
&=\sum_{j, k=1}^{n}\mu_k\omega (s_j x_j + t_j y_j, t_k x_k - s_k y_k)\\
&=\sum_{j=1}^{n}\mu_j (s_j^2+t_j^2)
\end{align*}
\end{proof}

Given an eigenvalue of $\op{F}$, $\lambda_j=i\mu_j$, the real $\mu_j>0$ is called a \textit{symplectic eigenvalue} of $\op{F}$ and $\{x_j, y_j\}$ is called the \textit{symplectic eigenvector pair} corresponding to $\mu_j$ and the symmetric bilinear form $A$ is called \textit{symplectically diagonalizable}.\\
The matrix representing $A$ in the symplectic basis given by Theorem \ref{will} is 
$$\begin{bmatrix}
    \op{D}_n & \op{0}_n\\
    \op{0}_n & \op{D}_n
\end{bmatrix}$$ where each $\op{D}_n=diag(\mu_1,\dots,\mu_n)$ is an $n\times n$ block diagonal matrix with $\mu_i>0$ being symplectic eigenvalues for all $i=1,2,\dots,n$ and $\op{0}_n$ is the $n\times n$ zero matrix.\\
If $B_{\lambda}= V_{\lambda}\oplus V_{\bar{\lambda}}$ is non-empty then $\mu$ is a symplectic eigenvalue of $\op{F}$ and $B_{\lambda}$ is the \textit{symplectic eigenspace} corresponding to the symplectic eigenvalue $\mu$. \\
One interpretation of Williamson's theorem is the following: 

If $\{\mu_1,\mu_2,\dots,\mu_k\}$ is a complete list of all symplectic eigenvalues of $\op{F}$ without repetition then $A$ is symplectically diagonalizable iff
\begin{equation}\label{1}V^{\C}=B_{\lambda_1}\oplus B_{\lambda_2}\oplus\cdots\oplus B_{\lambda_k}\end{equation} where each $B_{\lambda_j}$ is a symplectic subspace. In such a case, for every $z\in V^{\C}$ there exist unique vectors $z_i\in B_{\lambda_j}$ such that $z=z_1+z_2+\cdots+z_k$. This is called the \textit{symplectic eigenvector decomposition} of $z$.

One should note that the diagonalizing symplectic basis is not unique;
see Son \textit{et al.} \cite{son} for a detailed analysis of the set of diagonalizing bases.

The applications of Williamson's theorem are numerous, both in symplectic
geometry and topology, and in mathematical physics (Hamiltonian and quantum
mechanics); see for instance \cite{HZ} and \cite{Birk}. We mention that in a
recent preprint \cite{Kumar} Kumar and Tonny give an interesting account
of the developments of Williamson's result and its applications to operator
theory. \par One of the most important applications is the study of systems with
quadratic Hamiltonians, such as linear oscillators or small oscillations
around equilibrium points, because it helps in obtaining normal modes and
simplifies the analysis of the dynamics. In addition it allows, using the
theory of the metaplectic group the study of the quantum counterpart by
allowing the explicit calculations of the solutions of Schr\"{o}dinger's
equation for systems whose classical counterpart have a quadratic Hamiltonian.
This has immediate applications in the field of quantum optics because it
allows a detailed study of Gaussian beams and their propagation, permitting the decomposition of a multi-mode optical field into independent modes.

Suppose for instance the Hamiltonian $H$ is defined by
\[
H(z)=\frac{1}{2}\langle z,\operatorname{M}z\rangle \text{ \ , \ }\operatorname{M}>0\in\mathbb{M}(\R^{2n})
\]
the corresponding Hamilton equations $\dot{z}=\operatorname{J}\nabla_{z}H$ are the linear
system $\dot{z}=\operatorname{JM}z$ whose solution is simply $z(t)=e^{t\operatorname{JM}}z(0)$; due to the
fact that $\operatorname{JM}\in\operatorname*{\mathfrak{sp}}(\R^{2n})$ (the symplectic Lie algebra) we have
$\op{S}_{t}=e^{t\operatorname{JM}}\in\operatorname*{Sp}(\R^{2n})$ and the symplectic automorphisms
$\op{S}_{t}$ are explicitly calculated by considering the Williamson
diagonalization.

This reduces the study of the Hamiltonian system $\dot{z}=\operatorname{J}\nabla_{z}H$ \ to
that when $H$ has the simple form $H=\Lambda \langle x,x\rangle+\Lambda \langle p,p\rangle$ that
is in coordinates $z_{j}=(x_{j},p_{j})$,
\[
H(x,p)=\sum_{j=1}^{n}\lambda_{j}(x_{j}^{2}+p_{j}^{2})\text{ \ \ },\text{
\ \ }\lambda_{j}>0\text{.}%
\]
The solutions are well-know in this case ($H$ is a sum of harmonic
oscillators). 

\vspace{0.4cm}
Now, let $(V,\omega)$ be a $2n$-dimensional symplectic vector space equipped with a positive-semi definite quadratic form $f$.
To this quadratic form, corresponds a symmetric bilinear form $A$ on $V$ such that $f(v)=A(v,v)$ for all $v\in V$. There also exists a linear map $\op{F}:V\to V$ such that 
$$A(v,w)=\omega(v,\op{F}w)$$ for all $v,w\in V$.

Let $V_{(\lambda)}\subseteq V^\C$ denote the generalized eigenspace of $\op{F}$ corresponding to eigenvalue $\lambda\in\C$.
We also define the \textit{radical} of the bilinear form $A$ as:
$$Rad (A) = \{x\in V^\C\;|\;A(x,y)=0\;;\;\forall y\in V^\C\}=Ker(\op{F})\subset V_{(0)}$$.

\begin{lemma}\label{lem9}
Let $v\in V_{(\lambda)}$ and $w\in V_{(\mu)}$. If $\lambda+\mu\neq 0$, then $\omega(v,w)=0$.
\end{lemma}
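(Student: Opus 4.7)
The plan is to adapt the proof of Lemma \ref{lem1} to the generalized-eigenspace setting by replacing the single application of $\op{F}+\mu\op{I}$ with an $l$-fold iterate, where $l$ is the exponent killing $w$ in its generalized eigenspace. The algebraic engine is the same: anti-symmetry of $\op{F}$ with respect to $\omega$ together with the fact that $\op{F}+\mu\op{I}$ acts invertibly on $V_{(\lambda)}$ whenever $\lambda+\mu\neq 0$.

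First I would record the polynomial version of anti-symmetry. From $\omega(\op{F}v,w)=-\omega(v,\op{F}w)$ one immediately obtains, by linearity in the coefficients, that $\omega(p(\op{F})v,w)=\omega(v,p(-\op{F})w)$ for every polynomial $p$ with complex coefficients. I would state this as a short observation at the start of the proof.

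Next, fix $w\in V_{(\mu)}$ and pick $l\geq 1$ with $(\op{F}-\mu\op{I})^l w=0$, equivalently $(-\op{F}+\mu\op{I})^l w=0$. Then for the polynomial $p(x)=(x+\mu)^l$ we have $p(-\op{F})w=(-\op{F}+\mu\op{I})^l w=0$, so the identity above reduces to $\omega((\op{F}+\mu\op{I})^l u,w)=0$ for every $u\in V^{\C}$. The task is therefore to express the given $v\in V_{(\lambda)}$ as $(\op{F}+\mu\op{I})^l u$ for some $u\in V_{(\lambda)}$. Since $V_{(\lambda)}$ is $\op{F}$-invariant and the restriction of $\op{F}$ to $V_{(\lambda)}$ has $\lambda$ as its only eigenvalue (generalized), the restriction of $\op{F}+\mu\op{I}$ to $V_{(\lambda)}$ has the single eigenvalue $\lambda+\mu\neq 0$, hence is invertible; so its $l$-th power is invertible on $V_{(\lambda)}$ and the required $u$ exists. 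Substituting gives $\omega(v,w)=\omega((\op{F}+\mu\op{I})^l u,w)=\omega(u,(-\op{F}+\mu\op{I})^l w)=0$, as wanted.

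I do not expect a genuine obstacle: the only place one could slip is in tracking signs when pushing the polynomial across $\omega$ and in confirming that invertibility of $\op{F}+\mu\op{I}$ on the generalized eigenspace survives the passage from eigenvectors to generalized eigenvectors. Both are routine once one notes that the spectrum of $\op{F}|_{V_{(\lambda)}}$ is $\{\lambda\}$. I would also remark, at the end, that Lemma \ref{lem1} is recovered as the case $k=l=1$, which is a useful sanity check and motivates presenting the proof in exactly the form above.
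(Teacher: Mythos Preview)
Your argument is correct and follows essentially the same route as the paper: both use that $\op{F}+\mu\op{I}$ is invertible on $V_{(\lambda)}$ when $\lambda+\mu\neq0$, write $v=(\op{F}+\mu\op{I})^{N}u$ for a suitable exponent, and then push the polynomial across $\omega$ via anti-symmetry to hit $(-\op{F}+\mu\op{I})^{N}w=0$. The only cosmetic differences are that you make the polynomial identity $\omega(p(\op{F})v,w)=\omega(v,p(-\op{F})w)$ explicit and choose the exponent to be the nilpotency index of $w$, whereas the paper simply takes $N$ ``large enough''.
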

\begin{proof}
When $\lambda+\mu\neq 0$,
notice that $V_{(\lambda)}$ is invariant under $\op{F}+\mu\op{I}$. Moreover, $\op{F}+\mu\op{I}$ is injective as a map $V_{(\lambda)}\to V_{(\lambda)}$, making it and its powers bijections since the space is finite dimensional. Therefore, for $N$ large enough, there exists a unique $u\in V_{(\lambda)}$ st. $v=(\op{F}+\mu\op{I})^N u\in V_{(\lambda)}$. Also take $w\in V_{(\mu)}$ respectively, then we have:
$$\omega(v,w)=\omega((\op{F}+\mu\op{I})^Nu,w)=\omega(u,(-\op{F}+\mu\op{I})^N w)=0$$
\end{proof}
\begin{proposition}\label{prop1}
Given a positive-semi definite symmetric bilinear form $A$ on a vector space $V$, if $A(v,v)=0$ for $v\in V$ then $A(v,w)=0$ for all $w\in V$ i.e. $v\in Ker(\op{F})$.     
\end{proposition}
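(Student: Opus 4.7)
The plan is to exploit the positive semi-definiteness via the standard Cauchy--Schwarz trick for degenerate inner products, and then translate the conclusion about $A$ into a statement about $\operatorname{F}$ using non-degeneracy of $\omega$.

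First I would fix an arbitrary $w \in V$ and, for every real $t$, consider the scalar $A(v+tw,\,v+tw)$. By hypothesis $A \geq 0$, so this quantity is non-negative for all $t$. Expanding using bilinearity and the symmetry of $A$, and using the assumption $A(v,v)=0$, this becomes
\[
t^{2}\,A(w,w) + 2t\,A(v,w) \;\geq\; 0 \qquad \text{for all } t\in\mathbb{R}.
\]
I would now split on whether $A(w,w)$ vanishes. If $A(w,w)=0$, then $2t\,A(v,w) \geq 0$ for every real $t$, which is only possible when $A(v,w)=0$. If $A(w,w)>0$, the left-hand side is a quadratic in $t$ that is bounded below by zero on all of $\mathbb{R}$, so its discriminant must be non-positive; this yields $4\,A(v,w)^{2} \leq 0$, again forcing $A(v,w)=0$. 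Either way, $A(v,w)=0$ for the arbitrary $w$ chosen.

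Finally, to identify this condition with membership in $\ker(\operatorname{F})$, I would use the defining identity $A(v,w)=\omega(v,\operatorname{F}w)$ together with the symmetry of $A$. Since $A(v,w)=A(w,v)=\omega(w,\operatorname{F}v)$, vanishing of $A(v,w)$ for all $w$ means $\omega(w,\operatorname{F}v)=0$ for all $w\in V$. Non-degeneracy of the symplectic form $\omega$ then forces $\operatorname{F}v=0$, i.e.\ $v\in\ker(\operatorname{F})$, as claimed.

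There is not really a hard step here; the only subtlety is that one must be careful not to divide by $A(w,w)$ without first handling the case where it vanishes, which is exactly the case in which the naive Cauchy--Schwarz argument would be ill-posed. Once the two cases are separated, the argument reduces to an elementary analysis of a non-negative quadratic in $t$, and the passage to $\ker(\operatorname{F})$ is a one-line consequence of the symmetry of $A$ and the non-degeneracy of $\omega$.
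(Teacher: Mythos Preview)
Your argument is correct and follows the same route as the paper: expand $A(v+tw,v+tw)$ and use non-negativity of the resulting quadratic in $t$ to force $A(v,w)=0$. You are in fact slightly more careful than the paper, since you separate out the case $A(w,w)=0$ (where the discriminant argument is not literally applicable) and you spell out the passage from $A(v,\cdot)=0$ to $\operatorname{F}v=0$ via the symmetry of $A$ and non-degeneracy of $\omega$, which the paper leaves implicit.
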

\begin{proof}
$$0\leq A(v+tw, v+tw)= A(v,v)+2t A(v,w) +t^2 A(w,w)$$
The means that the discriminant of this quadratic must be non-positive.
$$\Rightarrow 4A(v,w)^2\leq 0 \Rightarrow A(v,w)=0$$
\end{proof}

\begin{lemma}\label{lem7}
All non-zero eigenvalues of $\op{F}$ are purely imaginary.
\end{lemma}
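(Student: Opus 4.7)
The plan is to adapt the argument of Lemma \ref{lem2} to the semi-definite setting, replacing ordinary eigenspaces with generalized eigenspaces $V_{(\lambda)}$ and invoking Lemma \ref{lem9} in place of Lemma \ref{lem1}. The only place where positive-definiteness of $f$ enters the earlier proof is in using the contrapositive of ``$f(x)=0 \Rightarrow x=0$''; here we must replace this with Proposition \ref{prop1}, which says that isotropic vectors for a positive semi-definite $A$ lie in $\ker \op{F}$.

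Suppose for contradiction that $\lambda\in\C$ is a non-zero eigenvalue of $\op{F}$ with $\Re\lambda\neq 0$, and let $V_{(\lambda)}\neq\{0\}$. As before, set $W_\lambda=\Re(V_{(\lambda)})=\{\tfrac{1}{2}(v+\bar v) : v\in V_{(\lambda)}\}\subset V$. First I would show $W_\lambda\neq\{0\}$ by exactly the argument used in Lemma \ref{lem2}: if $v+\bar v=0$ for every $v\in V_{(\lambda)}$, then since $iv\in V_{(\lambda)}$ as well, one also gets $v-\bar v=0$, forcing $v=0$, a contradiction.

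Next I would show that $A$ vanishes identically on $W_\lambda$. Given $x,y\in W_\lambda$, write $x=\Re u$ with $u\in V_{(\lambda)}$, and $y=\Re w$ with $w\in V_{(\lambda)}$. Since $V_{(\lambda)}$ is $\op{F}$-invariant, $\op{F}w\in V_{(\lambda)}$, so $\op{F}y=\Re(\op{F}w)$. Setting $v=\op{F}w\in V_{(\lambda)}$, expanding
\begin{align*}
A(x,y)=\omega(x,\op{F}y)&=\tfrac{1}{4}\bigl[\omega(u,v)+\omega(u,\bar v)+\omega(\bar u,v)+\omega(\bar u,\bar v)\bigr],
\end{align*}
and applying Lemma \ref{lem9} to each term kills them all: $u,v\in V_{(\lambda)}$ with $\lambda+\lambda=2\lambda\neq 0$ (since $\lambda\neq 0$); $\bar u,\bar v\in V_{(\bar\lambda)}$ with $2\bar\lambda\neq 0$; and the mixed terms use $\lambda+\bar\lambda=2\Re\lambda\neq 0$. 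Thus $A(x,y)=0$ on $W_\lambda$.

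Finally I combine these facts to reach a contradiction. Pick any non-zero $x\in W_\lambda$. By the previous step $A(x,x)=0$, so Proposition \ref{prop1} forces $x\in\ker\op{F}\subset V_{(0)}$. On the other hand $x\in V_{(\lambda)}+V_{(\bar\lambda)}$ by construction, and since $\lambda\neq 0$ the generalized eigenspace decomposition gives $(V_{(\lambda)}+V_{(\bar\lambda)})\cap V_{(0)}=\{0\}$, whence $x=0$, contradicting the choice of $x$. Therefore $\Re\lambda=0$, i.e.\ every non-zero eigenvalue of $\op{F}$ is purely imaginary.

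The only conceptual obstacle compared with the definite case is justifying the use of Proposition \ref{prop1} on real vectors in $W_\lambda$ while still extracting a contradiction from the complex decomposition; the bookkeeping that $W_\lambda$ sits inside $V_{(\lambda)}+V_{(\bar\lambda)}$ and hence cannot meet $V_{(0)}$ when $\lambda\neq 0$ is what closes this gap.
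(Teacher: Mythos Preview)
Your proposal is correct and follows essentially the same route as the paper: pass to generalized eigenspaces $V_{(\lambda)}$, show via Lemma~\ref{lem9} that $A$ vanishes on the real part $W_\lambda=\Re(V_{(\lambda)})$, and then invoke Proposition~\ref{prop1} to force $W_\lambda\subset\ker\op{F}\subset V_{(0)}$, contradicting $\lambda\neq 0$. Your final contradiction (observing $W_\lambda\subset V_{(\lambda)}+V_{(\bar\lambda)}$ meets $V_{(0)}$ only in $0$) is in fact stated more carefully than the paper's version, which somewhat loosely writes $V_{(\lambda)}\subseteq\ker\op{F}$ where $W_{(\lambda)}$ is meant.
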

\begin{proof}
Suppose there existed $\lambda\neq 0\in\C$ with $\Re \lambda\neq 0$ and $V_{(\lambda)}\neq \{0\}$. For $v\in V^{\C}$, set $\Re v=\frac{1}{2}(v+\overline{v})\in V$. Then 
$$W_{(\lambda)}=\Re(V_{(\lambda)})=\{\Re v\;|\;v\in V_{(\lambda)}\}$$
is a linear subspace of $V$. We claim that \begin{enumerate}\item\label{i1}$W_{(\lambda)}\neq \{0\};$\item\label{i2}$A(x,y)=0$ for all $x,y\in W_{(\lambda)}$ \end{enumerate}
Assume that $\Re v=0$ for all $v\in V_{(\lambda)}$; then $v+\overline{v}=0$ for all $v\in V_{(\lambda)}$. But if $v\in V_{(\lambda)}$, then $iv\in V_{(\lambda)}$, so $iv+\overline{iv}=v-\overline{v}=0$ and hence $v=0$. This contradicts the assumption $V_{(\lambda)}\neq \{0\}$, thus proving \ref{i1}. Now let $x, \op{F}y\in W_{(\lambda)}$. Writing $x=\Re u$ and $\op{F}y=\Re v$ with $u, v\in V_{(\lambda)}$, we find
\begin{align*}
A(x,y)=\omega(x,\op{F}y)&=\frac{1}{4}\omega(u+\overline{u},v+\overline{v})\\
&=\frac{1}{4}\omega(u, v)+\frac{1}{4}\omega(u,\overline{v})+\frac{1}{4}\omega(\overline{u}, v)+\frac{1}{4}\omega(\overline{u},\overline{v})
\end{align*}
We have $\overline{u}, \overline{v}\in V_{(\bar{\lambda})}$ because $\op{F}$ is a real linear map. Since $\Re \lambda\neq 0$, we have $\lambda+\bar{\lambda}\neq 0$, so 
$\omega (u, \overline{v})=\omega(\overline{u}, v)=0$ by Lemma \ref{lem5}. Similarly, $\omega(u, v)=\omega(\overline{u}, \overline{v})=0$ by Lemma \ref{lem5}, because $\lambda \neq 0$. Thus, $A(x, y)=0$, proving \ref{i2}. The assertions $\ref{i1}-\ref{i2}$ imply $V_{(\lambda)}\subseteq Ker(\op{F})\subseteq V_{(0)}$ due to Proposition \ref{prop1}. This contradicts $\lambda\neq 0$.\\
Therefore, $\Re\lambda=0$.
\end{proof}
\begin{lemma}\label{lem6}
For an operator $\op{F}$ on a vector space $V$ satisfying $\omega(v,\op{F}w)=-\omega(\op{F}v,w)$ for $v,w\in V$, we have $Im(\op{F})=(Ker(\op{F}))^\omega$.   
\end{lemma}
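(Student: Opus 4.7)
The plan is to prove the two inclusions separately, with the forward inclusion handled by a direct computation using the $\omega$-antisymmetry hypothesis and the reverse inclusion established by a dimension count, using the non-degeneracy of $\omega$.

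First I would establish $\operatorname{Im}(\op{F}) \subseteq (\operatorname{Ker}(\op{F}))^\omega$. Take an arbitrary element $\op{F}w$ of the image and any $v \in \operatorname{Ker}(\op{F})$. Using the hypothesis $\omega(v, \op{F}w) = -\omega(\op{F}v, w)$, the right-hand side becomes $-\omega(0, w) = 0$, so $\op{F}w$ is $\omega$-orthogonal to every element of $\operatorname{Ker}(\op{F})$. This gives the forward inclusion essentially for free.

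For the reverse inclusion, rather than exhibit a preimage for each element of $(\operatorname{Ker}(\op{F}))^\omega$ (which would be awkward without more structure), I would compare dimensions. The symplectic form $\omega$ is non-degenerate, so the dimension formula $\dim W + \dim W^\omega = \dim V$ applies to every subspace $W \subseteq V$. Taking $W = \operatorname{Ker}(\op{F})$, this gives
$$\dim (\operatorname{Ker}(\op{F}))^\omega = \dim V - \dim \operatorname{Ker}(\op{F}).$$
On the other hand, the rank-nullity theorem gives $\dim \operatorname{Im}(\op{F}) = \dim V - \dim \operatorname{Ker}(\op{F})$ as well. Combined with the forward inclusion, equality of dimensions upgrades the containment to equality.

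I do not anticipate any serious obstacle here: the forward inclusion is a one-line computation directly using the antisymmetry hypothesis, and the reverse inclusion relies only on two standard facts, namely the non-degeneracy of $\omega$ (which is built into the definition of a symplectic form) and rank-nullity. The only mild subtlety is remembering that we need $\omega$ to be non-degenerate on all of $V$ to use the dimension formula for the symplectic complement; this is automatic in our setting since $(V,\omega)$ is a symplectic vector space.
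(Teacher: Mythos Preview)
Your proposal is correct and follows essentially the same route as the paper: it also proves the forward inclusion by a one-line computation using the $\omega$-antisymmetry of $\op{F}$, and then upgrades to equality by combining rank--nullity with the symplectic dimension formula $\dim W + \dim W^\omega = \dim V$.
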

\begin{proof}
Take $k,w\in Ker(\op{F}),Im(\op{F})$ respectively. Then $w=\op{F}v$ for some $v\in V$ and
$$\omega(k,\op{F}v)=-\omega(\op{F}k,v)=0$$
Therefore, $Im(\op{F})\subseteq (Ker(\op{F}))^\omega$.\\
Finally, we note from the rank-nullity and symplectic dimension theorem that $dim (Im(\op{F}))=dim(V)-dim(Ker(\op{F}))=dim((Ker(\op{F}))^\omega)$.\\
This concludes the proof.
\end{proof}
\begin{lemma}\label{lem8}
$V_{(\lambda)}=V_{\lambda}$ for $\lambda\neq 0$.
\end{lemma}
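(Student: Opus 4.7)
\vspace{0.4cm}
\noindent\textbf{Proof plan.} My strategy is to attach to $A$ a positive semi-definite Hermitian form $H$ on $V^{\C}$, observe that $\op{F}$ is anti-Hermitian with respect to $H$, and use this rigidity to forbid non-trivial Jordan chains at every non-zero eigenvalue.

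Explicitly, I would first extend $A$ complex-bilinearly to $V^{\C}$ and set $H(v,w):=A(v,\overline{w})$. Writing $v=a+ib$ with $a,b\in V$, one has $H(v,v)=A(a,a)+A(b,b)\geq 0$, so $H$ is positive semi-definite; moreover $H(v,v)=0$ forces $A(a,a)=A(b,b)=0$, and Proposition \ref{prop1} then yields $a,b\in Ker(\op{F})$, i.e., $v\in Ker(\op{F})^{\C}$. Next, the symmetry of $A$ combined with the $\omega$-anti-symmetry of $\op{F}$ gives $A(\op{F}v,w)=-A(v,\op{F}w)$, and since $\op{F}$ is real we have $\op{F}\overline{w}=\overline{\op{F}w}$, so
\[H(\op{F}v,w)+H(v,\op{F}w)=A(\op{F}v,\overline{w})+A(v,\op{F}\overline{w})=0,\]
i.e., $\op{F}$ is anti-Hermitian with respect to $H$.

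Since $V_{\lambda}\subseteq V_{(\lambda)}$ is automatic, I would prove the reverse inclusion by contradiction. If $V_{(\lambda)}\neq V_{\lambda}$ for some $\lambda\neq 0$, then $\op{F}|_{V_{(\lambda)}}$ admits a Jordan chain of length at least two: pick $v_1\in V_{\lambda}\setminus\{0\}$ and $v_2\in V_{(\lambda)}$ with $\op{F}v_2=\lambda v_2+v_1$. Substituting into the anti-Hermiticity identity and using linearity/conjugate-linearity of $H$,
\[0=H(\op{F}v_1,v_2)+H(v_1,\op{F}v_2)=(\lambda+\overline{\lambda})H(v_1,v_2)+H(v_1,v_1).\]
By Lemma \ref{lem7}, $\lambda$ is purely imaginary, hence $\lambda+\overline{\lambda}=0$ and the identity collapses to $H(v_1,v_1)=0$. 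The setup step then forces $v_1\in Ker(\op{F})^{\C}\subseteq V_{(0)}$, but $v_1\in V_{(\lambda)}$ with $\lambda\neq 0$ and distinct generalized eigenspaces of a linear operator intersect trivially, so $v_1=0$, a contradiction.

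The step I expect to be the main obstacle is precisely the handling of the degeneracy of $A$: unlike the positive-definite setting, $A$ has a non-trivial radical and one cannot simply copy the Williamson diagonalization argument. Packaging $A$ into the Hermitian form $H$ is what makes the argument go through — it simultaneously encodes positive semi-definiteness, locates its radical as $Ker(\op{F})^{\C}$, and upgrades $\omega$-anti-symmetry to anti-Hermiticity — after which the purely imaginary character of the non-zero eigenvalues (Lemma \ref{lem7}) is exactly what makes the ``off-diagonal'' coefficient $\lambda+\overline{\lambda}$ vanish in the Jordan-chain identity, leaving only the decisive equation $H(v_1,v_1)=0$.
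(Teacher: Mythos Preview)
Your argument is correct and takes a genuinely different route from the paper's. The paper restricts to the real $\op{F}$-invariant subspace $U=(\bigoplus_{\lambda\neq 0}V_{(\lambda)})\cap V$, observes that $Rad(A)\cap U=\{0\}$ (since $Rad(A)=Ker(\op{F})\subset V_{(0)}$) so that $A$ is positive-definite on $U$, and then invokes the already-proved Williamson Theorem~\ref{will} for $\op{F}|_U$; diagonalizability of $\op{F}|_U$ in a symplectic basis immediately gives $V_{(\lambda)}=V_\lambda$ for all $\lambda\neq 0$. Your approach instead manufactures the Hermitian form $H(v,w)=A(v,\overline{w})$, shows $\op{F}$ is $H$-anti-Hermitian, and kills Jordan chains directly via the identity $(\lambda+\overline{\lambda})H(v_1,v_2)+H(v_1,v_1)=0$, with Lemma~\ref{lem7} supplying $\lambda+\overline{\lambda}=0$. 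The paper's proof is shorter on the page but leans on the full strength of Theorem~\ref{will} (and implicitly on $U$ being a symplectic subspace, which follows from Lemma~\ref{lem9}); your proof is self-contained, using only Proposition~\ref{prop1} and Lemma~\ref{lem7}, and makes transparent exactly \emph{why} the semi-definiteness of $A$ is compatible with semisimplicity away from zero---namely, that the nilpotent part of $\op{F}$ on any $V_{(\lambda)}$ would force an eigenvector into the $H$-radical, which sits entirely inside $V_{(0)}$.
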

\begin{proof}
Let us first consider the Jordan decomposition of $V^{\C}$ into the generalized eigenspaces of $\op{F}$ i.e.
$$V^{\C}=V_{(0)}\bigoplus_{\lambda\neq 0} V_{(\lambda)}$$
Let $U=(\bigoplus_{\lambda\neq 0} V_{(\lambda))})\cap V$, notice that $U^{\C}=\bigoplus_{\lambda\neq 0} V_{(\lambda)}$.\\
We know that $U$ is $\op{F}$-invariant and we find that $A$ is positive-definite on $U$ because\\ $Rad(A)\cap U=\{0\}$.

Finally, using Theorem \ref{will} on $\op{F}|_{U}$, the statement of the lemma follows.
\end{proof}
\begin{theorem}[Hörmander \cite{Horm}]\label{horm1}
There exists a symplectic basis \\$\{x_1,x_2,\dots,x_n,y_1,y_2,\dots,y_n\}$ of $V$ such that for $v=\sum_{j=1}^{n}(s_jx_j+t_jy_j)\in V$, we have \\$f(v)=\sum_{j=1}^{k}\mu_j (s_j^2+t_j^2)+\sum_{j=k+1}^{k+l}s_j^2$, for $1\leq k\leq k+l\leq n$, where $\mu_j>0$.
\end{theorem}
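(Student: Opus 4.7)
My plan is to decompose $V$ into an $\omega$-orthogonal direct sum of a non-degenerate factor, on which Williamson's Theorem~\ref{will} applies, and a ``nilpotent'' factor that requires a new argument. First, I would consider the generalized eigenspace decomposition $V^{\C} = V_{(0)} \oplus \bigoplus_{\lambda \neq 0} V_{(\lambda)}$. By Lemma~\ref{lem9}, $V_{(0)}$ is $\omega$-orthogonal to $V_{(\lambda)}$ for every $\lambda \neq 0$, so setting $U := V \cap \bigl(\bigoplus_{\lambda \neq 0} V_{(\lambda)}\bigr)$ and $W := V \cap V_{(0)}$, the reality of $\op{F}$ makes both subspaces real and $\op{F}$-invariant, and $V = U \oplus W$ becomes a symplectic direct sum of symplectic subspaces. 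Since $Ker(\op{F}) \subseteq W$, Proposition~\ref{prop1} together with $U \cap W = \{0\}$ forces $A|_U$ to be positive-definite (this is essentially the content of Lemma~\ref{lem8}), so Theorem~\ref{will} applied to $U$ yields a symplectic basis $\{x_1, \dots, x_k, y_1, \dots, y_k\}$ of $U$ in which $f|_U$ has the form $\sum_{j=1}^k \mu_j(s_j^2 + t_j^2)$, producing the first block of the desired normal form.

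The bulk of the argument lies on the degenerate factor $W$, where $\op{F}|_W$ is nilpotent. The key technical step, and the one I expect to be the main obstacle, is to show $\op{F}^2 = 0$ on $W$: this is where positive-semi definiteness genuinely constrains the Jordan structure of $\op{F}|_W$, ruling out blocks of size~$3$ or more. I would argue by contradiction. If $\op{F}^2|_W \neq 0$, nilpotence provides a $u \in W$ with $\op{F}^2 u \neq 0$ and $\op{F}^3 u = 0$ (take $u = \op{F}^{m-2} v$, where $\op{F}^m v \neq 0$ is the longest non-vanishing iterate). Using the $\omega$-antisymmetry of $\op{F}$,
\begin{equation*}
A(\op{F}u, \op{F}u) = \omega(\op{F}u, \op{F}^2 u) = -\omega(u, \op{F}^3 u) = 0,
\end{equation*}
and then Proposition~\ref{prop1} forces $\op{F}u \in Ker(\op{F})$, contradicting $\op{F}^2 u \neq 0$. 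Hence every Jordan block of $\op{F}|_W$ has size at most~$2$.

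With $\op{F}^2|_W = 0$ in hand, I would finish by inducting on $\dim W$. If $\op{F}|_W = 0$ then $A|_W = 0$, and any symplectic basis of $W$ contributes nothing to $f$, ending the induction. Otherwise, pick $u \in W$ with $\op{F}u \neq 0$; since $u \notin Ker(\op{F}) = Rad(A|_W)$ we have $A(u,u) > 0$, and after rescaling, $\omega(u, \op{F}u) = A(u,u) = 1$. Setting $y := \op{F}u$ and $P := \mathrm{span}(u, y)$, one checks that for any $v \in P^\omega$ the identities $\omega(\op{F}v, u) = -\omega(v, y) = 0$ and $\omega(\op{F}v, y) = -\omega(v, \op{F}^2 u) = 0$ make $P^\omega$ an $\op{F}$-invariant symplectic subspace, while $A(u,u) = 1$, $A(u,y) = A(y,y) = 0$, and the $\omega$-orthogonality of $P$ and $P^\omega$ ensure the pair $\{u, y\}$ contributes a single $s^2$ term to $f$ with no cross terms. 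The inductive hypothesis applied to $P^\omega$ peels off $l-1$ further such pairs until $\op{F}$ vanishes on a remaining symplectic subspace of dimension $2(n-k-l)$, on which any symplectic basis gives no contribution. Assembling this with the Williamson basis of $U$ yields the symplectic basis and normal form claimed in the theorem.
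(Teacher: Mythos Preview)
Your proof is correct and follows the same overall architecture as the paper: split $V$ into the $\op{F}$-invariant symplectic subspaces $U$ (carrying the nonzero eigenvalues) and $W = V \cap V_{(0)}$, apply Williamson on $U$, and analyze the nilpotent action of $\op{F}$ on $W$.

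The treatment of $W$ is where you diverge. The paper runs an inductive case analysis: at each step either (Case~1) there is $y$ with $\op{F}^2 y = 0$ and $A(y,y)\neq 0$, yielding a plane contributing $s^2$; or (Case~2) two kernel vectors pair nontrivially under $\omega$, yielding a null plane; or (Case~3) neither holds, and a short exact sequence argument forces $V_{(0)}=\{0\}$. You instead prove a single structural lemma up front, namely $\op{F}^2|_W = 0$, via the one-line computation $A(\op{F}u,\op{F}u)=-\omega(u,\op{F}^3u)=0$ together with Proposition~\ref{prop1}. This collapses the paper's three cases into two: peel off $\{u,\op{F}u\}$-planes while $\op{F}|_W\neq 0$, then take any symplectic basis of the remaining null space. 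Your route is more streamlined and isolates exactly where positive-semi definiteness constrains the Jordan structure (blocks of size at most~$2$), whereas the paper's case split has the advantage of mirroring the template reused in the harder indefinite Theorem~\ref{horm2}, where $\op{F}^2|_W=0$ can genuinely fail.
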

\begin{proof}
First of all, we know that $V^\C$ can be Jordan-decomposed into the generalized eigenspaces of $\op{F}$. Namely,
$$V^\C=Y\oplus V_{(0)}$$
where $Y=\bigoplus_{j=1}^{k} (V_{(\lambda_j)}\oplus V_{(\bar{\lambda}_j)})$.
Applying Lemma \ref{lem8} to each generalized eigenspace in $Y$, we have that  $V_{(\lambda_j)}=V_{\lambda_j}$ for $\lambda_j\neq 0$, where each $\lambda_j$ is purely imaginary due to Lemma \ref{lem7}.

Now take $v\in V_{(\lambda)}$. 
If $\lambda=i\mu\neq 0$ then consider $x,y\in V$ as in Lemma \ref{lem3} and let $W=span\{x,y\}$. $W$, is an $\op{F}$-invariant symplectic plane in $V$. We normalize $x$ \& $y$ such that $\omega(x,y)=1$  and repeat this procedure on $W^\omega\subset V$.

If $v\in V_{(0)}$, then take $y=\Re v$. We will consider three exhaustive cases.

Case 1: Suppose $\op{F^2}y=0$, $A(y,y)=\omega(y,\op{F}y)\neq 0$.\\
Then $\op{F}y\neq 0$, so $y\notin Ker(\op{F})$. $W=span\{y,\op{F}y\}$, is an $\op{F}$-invariant symplectic plane in $V$. We normalize $y$ \& $\op{F}y$ such that $\omega(y,\op{F}y)=1$.\\
If $w=t_1 y+t_2 \op{F}y\in W$ then $$A(w,w)=\omega(w,\op{F}w)=t_1^2$$
And now, we repeat this process on $W^\omega\subset V$.

Case 2: Suppose we can pick two real vectors $v,w\in Ker(\op{F})$ such that $\omega(v,w)\neq 0$.\\
Then $W=span\{v,w\}$, is an $\op{F}$-invariant symplectic plane in $V$. We normalize $v$ \& $w$ such that $\omega(v,w)=1$.\\
If $z=t_1 v + t_2 w\in W$ then $$A(z,z)=0$$
And now, we repeat this process on $W^\omega\subset V$.

Case 3: \begin{enumerate}[label=(\roman*)]
     \item $Ker(\op{F})$ is isotropic;
    \item If $v\in Ker(\op{F^2})$ then $A(v,v)=0$
\end{enumerate}
From (ii), we can deduce that $v\in Rad(A)=Ker(\op{F})$ using Proposition \ref{prop1}. \\Thus, $Ker(\op{F^2})=Ker(\op{F})$.

Now consider the short exact sequence: $$0\to Ker(\op{F})\xhookrightarrow{}Ker(\op{F^2})\to Ker(\op{F})\cap Im(\op{F})\to 0$$ 
From Lemma \ref{lem6}, we know that $Im(\op{F})=(Ker(\op{F}))^\omega$, therefore, \\$Ker(\op{F})\cap Im(\op{F})=Ker(\op{F})\cap(Ker(\op{F}))^\omega=Ker(\op{F})$, from assumption (i).\\
Finally, we can conclude that $$Ker(\op{F})\cong Ker(\op{F^2})/Ker(\op{F})=\{0\}$$
Therefore, $V_{(0)}=\{0\}$. This makes this case trivial

Finally, after exhausting the dimension, we get a symplectic basis \\$\{x_1,x_2,\dots,x_n,y_1,y_2,\dots,y_n\}$ of $V$ such that for $v=\sum_{j=1}^{n}(s_jx_j+t_jy_j)\in V$, we have $$f(v)=\sum_{j=1}^{k}\mu_j (s_j^2+t_j^2)+\sum_{j=k+1}^{k+l}s_j^2$$
\end{proof}

The matrix representing $A$ in the symplectic basis given by Theorem \ref{horm1} is 
$$\begin{bmatrix}
    \op{A}_n & \op{0}_n\\
    \op{0}_n & \op{B}_n
\end{bmatrix}$$
where $\op{A}_n= diag(\mu_1,\dots,\mu_k,\underbrace{1,\dots,1}_{l},\underbrace{0,\dots,0}_{n-k-l})$ and $\op{B}_n= diag(\mu_1,\dots,\mu_k,\underbrace{0,\dots,0}_{n-k})$ \vspace{0.3cm}\\are $n\times n$ block matrices where $\mu_i>0$ for $i=1,2,\dots,k$ are the symplectic eigenvalues of the non-degenerate part of $\op{F}$ and $\op{0}_n$ is the $n\times n$ zero matrix.

There is a special case of Theorem \ref{horm1} that will be of importance to us.

\begin{corollary}\label{semi}
If $Ker(\op{F})$ is a symplectic subspace, then there exists a symplectic basis \\$\{x_1,x_2,\dots,x_n,y_1,y_2,\dots,y_n\}$ of $V$ such that for $v=\sum_{j=1}^{n}(s_jx_j+t_jy_j)\in V$, we have \\$f(v)=\sum_{j=1}^{k}\mu_j (s_j^2+t_j^2)$.
\end{corollary}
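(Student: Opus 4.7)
The plan is to exploit the symplecticity hypothesis on $Ker(\op{F})$ to split $V$ into a piece where $A$ is positive-definite (to which the non-degenerate Williamson Theorem \ref{will} applies directly) and a piece where $A$ vanishes entirely (on which any symplectic basis will do), thereby eliminating the $l$ purely degenerate directions that produce the lone $s_j^2$ terms in Theorem \ref{horm1}.

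First I would establish that $V = Im(\op{F}) \oplus Ker(\op{F})$ as a symplectic direct sum. By Lemma \ref{lem6} we have $Im(\op{F}) = Ker(\op{F})^\omega$, so the hypothesis $Ker(\op{F})\cap Ker(\op{F})^\omega = \{0\}$ immediately gives $Ker(\op{F}) \cap Im(\op{F}) = \{0\}$; a dimension count via the symplectic dimension theorem then yields the claimed direct sum. Since the symplectic complement of a symplectic subspace is itself symplectic, $Im(\op{F})$ is a symplectic subspace that is symplectically orthogonal to $Ker(\op{F})$.

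Next I would verify that $\op{F}$ preserves $Im(\op{F})$ (which is immediate, since $\op{F}(Im(\op{F})) = Im(\op{F}^2) \subseteq Im(\op{F})$) and that $A|_{Im(\op{F})}$ is positive-definite: a nonzero $v \in Im(\op{F})$ cannot lie in $Ker(\op{F}) = Rad(A)$, so by the contrapositive of Proposition \ref{prop1} and the positive-semidefiniteness of $A$, we get $A(v,v)>0$. Thus the triple $(Im(\op{F}), \omega|_{Im(\op{F})}, A|_{Im(\op{F})})$ satisfies the hypotheses of Theorem \ref{will}, yielding a symplectic basis $\{x_1,\ldots,x_k,y_1,\ldots,y_k\}$ of $Im(\op{F})$ on which $f$ takes the form $\sum_{j=1}^{k}\mu_j(s_j^2+t_j^2)$.

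For $Ker(\op{F})$ the form $A$ is identically zero by definition of the radical, and the symplectic subspace hypothesis guarantees the existence of a symplectic basis $\{x_{k+1},\ldots,x_n,y_{k+1},\ldots,y_n\}$ via the Gram--Schmidt-type construction referenced in Section \ref{back}. Concatenating the two bases produces a symplectic basis of $V$ because $Ker(\op{F})$ and $Im(\op{F})$ are mutually symplectically orthogonal, and the formula for $f$ follows because only the $Im(\op{F})$ coordinates contribute. The main substantive step is really just the symplectic direct-sum decomposition; once that is in hand, the rest is a clean reduction to the non-degenerate Williamson theorem together with the trivial observation that $A$ annihilates its radical.
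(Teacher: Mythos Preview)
Your argument is correct, but it follows a genuinely different route from the paper's proof of Corollary \ref{semi}. The paper first invokes the full strength of Theorem \ref{horm1} to obtain the normal form $f(v)=\sum_{j=1}^{k}\mu_j(s_j^2+t_j^2)+\sum_{j=k+1}^{k+l}s_j^2$, then identifies $Ker(\op{F})$ with the span $U=span\{x_{k+l+1},\dots,x_n,y_{k+1},\dots,y_n\}$ and observes that $U$ is symplectic precisely when $l=0$. Your approach instead bypasses Theorem \ref{horm1} altogether: using Lemma \ref{lem6} and the symplecticity hypothesis you split $V=Im(\op{F})\oplus Ker(\op{F})$ as a symplectic direct sum, apply the non-degenerate Williamson Theorem \ref{will} directly on $Im(\op{F})$ (where $A$ is positive-definite by Proposition \ref{prop1}), and append an arbitrary symplectic basis of the radical. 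This is more elementary and self-contained---it never needs the case analysis in the proof of Theorem \ref{horm1}---and is in fact the coordinate-free version of the direct argument the paper gives in the Appendix for $V=\mathbb{R}^{2n}$. The paper's route, by contrast, is shorter once Theorem \ref{horm1} is already in hand and makes transparent exactly which piece of the general normal form the symplectic-kernel hypothesis kills.
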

\begin{proof}
From Theorem \ref{horm2}, there exists a symplectic basis $\{x_1,x_2,\dots,x_n,y_1,y_2,\dots,y_n\}$ of $V$ such that for $v=\sum_{j=1}^{n}(s_jx_j+t_jy_j)\in V$, we have $$f(v)=\sum_{j=1}^{k}\mu_j (s_j^2+t_j^2)+\sum_{j=k+1}^{k+l}s_j^2$$
The above formula show that $f$ vanishes on $U=span\{x_{k+l+1},\dots,x_n,y_{k+1},\dots,y_n\}$, implying that $Rad(A)=Ker(\op{F})=U$. The subspace $U$ is symplectic if and only if $l=0$.\\
Thus, we get a symplectic basis $\{x_1,x_2,\dots,x_n,y_1,y_2,\dots,y_n\}$ of $V$ such that \\for $v=\sum_{j=1}^{n}(s_jx_j+t_jy_j)\in V$, we have $$f(v)=\sum_{j=1}^{k}\mu_j (s_j^2+t_j^2)$$
\end{proof}

The matrix representing $A$ in the symplectic basis given by Corollary \ref{semi} is 
$$\begin{bmatrix}
    \op{D}_n & \op{0}_n\\
    \op{0}_n & \op{D}_n
\end{bmatrix}$$
where $\op{D}_n= diag(\mu_1,\dots,\mu_k,\underbrace{0,\dots,0}_{n-k})$ \vspace{0.3cm}\\is an $n\times n$ block matrices where $\mu_i>0$ for $i=1,2,\dots,k$ are the symplectic eigenvalues of the non-degenerate part of $\op{F}$ and $\op{0}_n$ is the $n\times n$ zero matrix.

See the Appendix (Section \ref{appen}) for the proof of Corollary \ref{semi} when $V=\R^{2n}$.

\begin{definition}\label{def1}
    Let $A$ be a symmetric bilinear form on $V$ with signature $(p,q)$, then $p$ is the maximal dimension of the subspace where $A$ is positive-definite and likewise, $q$ is the maximal dimension of the subspace where $A$ is negative-definite.
\end{definition}
From \textit{Sylvester's law of inertia}, we know that the signature of a symmetric bilinear form is an invariant.

Let us now consider the case when $A$ is hyperbolic i.e. has signature $(p,1)$.\\
We can no longer guarantee that Proposition \ref{prop1} holds true. We can see why this is in the example below:

\begin{example}\label{ex1}
$V=\R^2$, $A=\begin{bmatrix}0&1\\1&0\end{bmatrix}$ i.e. $A\left(\begin{bmatrix} x_1\\x_2
\end{bmatrix},\begin{bmatrix}y_1\\y_2\end{bmatrix}\right)=x_2y_1+x_1y_2$.

$A(e_1,e_1)=A(e_2,e_2)=0$ and $A(e_1,e_2)=1$, where $e_j$ denotes the $j$-th standard basis vector.\\
Also, $A(\frac{1}{2}(e_1+e_2,e_1+e_2))=1$ and $A(\frac{1}{2}(e_1-e_2,e_1-e_2))=-1$.

Therefore, $A$ is positive-definite on $span\{v_1=\frac{1}{\sqrt{2}}(e_1+e_2)\}$ and negative-definite on \\$span\{v_2=\frac{1}{\sqrt{2}}(e_1-e_2)\}$.

Note that while $Rad(A)=\{0\}$, $A$ vanishes on the one dimensional subspaces $W_1=span\{e_1\}$ and $W_2=span\{e_2\}$ that are disjoint from the radical. \\We call such subspaces $A$-isotropic subspaces and $H_2=span\{v_1,v_2\}$ the two dimensional hyperbolic plane. 
\end{example}

\begin{claim}\label{claim1}
For a symmetric bilinear form $A$ with signature $(p,1)$, all the $A$-isotropic subspaces are one dimensional.
\end{claim}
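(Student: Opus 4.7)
The plan is to reduce to the non-degenerate case via Sylvester's law of inertia and then run a short dimension count using a projection onto the one-dimensional negative-definite direction.

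First, I would use Sylvester's law of inertia to produce an $A$-orthogonal decomposition $V = V_+ \oplus V_- \oplus Rad(A)$, where $A$ is positive-definite on the $p$-dimensional subspace $V_+$, negative-definite on the one-dimensional subspace $V_-$, and identically zero on the radical, with the three summands pairwise $A$-orthogonal. Writing any $v \in V$ uniquely as $v = v_+ + v_- + r$ with $v_\pm \in V_\pm$ and $r \in Rad(A)$, this yields $A(v,v) = A(v_+, v_+) + A(v_-, v_-)$ with no cross terms.

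Next, let $W$ be an $A$-isotropic subspace in the sense of Example \ref{ex1}, i.e.\ a subspace on which $A$ vanishes and which meets $Rad(A)$ only at $0$. I would consider the linear map $\pi \colon W \to V_-$ sending $w \mapsto w_-$. Since $\dim V_- = 1$, we immediately have $\dim \ker \pi \geq \dim W - 1$. I would then show $\ker \pi = \{0\}$: if $\pi(w) = 0$ then $w = w_+ + r$, so isotropy gives $0 = A(w,w) = A(w_+, w_+)$, and positive-definiteness on $V_+$ forces $w_+ = 0$, leaving $w = r \in Rad(A) \cap W = \{0\}$. Hence $\dim W \leq 1$, as claimed, and the hyperbolic plane $H_2$ from Example \ref{ex1} shows the bound is sharp.

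The argument is essentially a one-line dimension count and there is no serious obstacle; the only mild subtlety is ensuring the decomposition is truly $A$-orthogonal, so that the cross terms $A(w_+, r)$ and $A(w_-, r)$ vanish for free, which is a standard byproduct of Sylvester's law rather than a genuine difficulty.
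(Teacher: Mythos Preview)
Your argument is correct and, in fact, tighter than the paper's own proof. Both you and the paper begin from the same Sylvester decomposition $V = Rad(A)\oplus V_+\oplus V_-$, but thereafter the routes diverge. The paper fixes a particular hyperbolic plane $H=\mathrm{span}\{v_+,v_-\}$, exhibits the two isotropic lines $\mathrm{span}\{v_+\pm v_-\}$ inside it, and then disposes of everything else with the remark that $A$ is positive-semi definite on the $A$-complement $H^A$; the bound $\dim W\le 1$ is left implicit in that last sentence. Your projection $\pi\colon W\to V_-$ and the observation that $\ker\pi=\{0\}$ give the bound directly and uniformly for \emph{every} $A$-isotropic $W$, without having to single out a hyperbolic plane or argue about what can happen on its complement. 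The paper's approach has the mild advantage of simultaneously producing the explicit isotropic lines used later; yours has the advantage of being a clean, self-contained dimension count that leaves no case analysis to the reader.
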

\begin{proof}
We can decompose $V$ into an $A$-orthogonal direct sum: $$V=Rad(A)\oplus V_{+}\oplus V_{-}$$
where $A$ is positive-definite on the $p$ dimensional subspace $V_{+}$ and likewise negative-definite on the one dimensional subspace $V_{-}$\\
Now choose vectors $v_{+},v_{-}\neq 0\in V_{+},V_{-}$ respectively such that $H=span\{v_{+},v_{-}\}$. With the appropriate normalization, we have $H\cong H_2$ from comparison to Example \ref{ex1}.\\ Therefore, we have the one dimensional $A$-isotropic subspaces $W_1=span\{v_{+}+v_{-}\}$ and $W_2=span\{v_{+}-v_{-}\}$.\\
We have no more $A$-isotropic subspaces since $A$ is positive-semi definite on $H^A$, the $A$-complement of $H$ and the statement of the claim follows.
\end{proof}

\begin{claim}\label{claim2}
$A$ cannot be negative-semi definite on a two dimensional subspace that has trivial intersection with $Rad(A)$.
\end{claim}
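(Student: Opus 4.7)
The plan is to argue by contradiction using the $A$-orthogonal Sylvester decomposition $V=Rad(A)\oplus V_+\oplus V_-$ already invoked in the proof of Claim \ref{claim1}: $\dim V_+=p$, $\dim V_-=1$, $A$ is positive-definite on $V_+$, and $A$ is negative-definite on $V_-$. Suppose, for contradiction, that $U\subset V$ is a two-dimensional subspace with $A|_U$ negative-semi definite and $U\cap Rad(A)=\{0\}$.

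The core observation is a dimension count. Consider the linear projection $\pi\colon V\to V_-$ along the complementary summand $Rad(A)\oplus V_+$. Since $\dim V_-=1$ and $\dim U=2$, the restriction $\pi|_U$ has kernel of dimension at least one, so we can pick a nonzero $v\in U$ lying inside $Rad(A)\oplus V_+$. Write $v=r+a$ with $r\in Rad(A)$ and $a\in V_+$.

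Now we exploit the signature. By $A$-orthogonality of the three summands and $r\in Rad(A)$, we obtain $A(v,v)=A(a,a)$. On one hand, negative-semi definiteness of $A|_U$ gives $A(v,v)\le 0$; on the other, positive-definiteness of $A$ on $V_+$ gives $A(a,a)\ge 0$, with equality iff $a=0$. Together these force $a=0$, and hence $v=r\in Rad(A)\cap U=\{0\}$, contradicting $v\neq 0$.

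There is no real obstacle once one has Sylvester's $A$-orthogonal decomposition in hand: dimension counting produces a nonzero element of $U$ inside $Rad(A)\oplus V_+$, and positive-definiteness on $V_+$ immediately rules it out. The one point that must be handled carefully is to take the projection along $Rad(A)\oplus V_+$ rather than along some non-$A$-orthogonal complement of $V_-$, so that the vector we extract actually lies in a subspace where $A$ is non-negative.
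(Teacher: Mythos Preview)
Your proof is correct and takes a genuinely different route from the paper. The paper argues by enumeration: it asserts that the only candidate two-dimensional subspaces are spans of pairs chosen from $\{v_-,\,v_+ + v_-,\,v_+ - v_-\}$ (with $v_\pm$ fixed as in Claim~\ref{claim1}) and then checks each pair. That enumeration step is not fully justified as stated---a general two-dimensional subspace need not be spanned by two vectors from that particular three-element set. Your argument sidesteps this entirely: the projection $\pi\colon V\to V_-$ along $Rad(A)\oplus V_+$ has one-dimensional range, so its restriction to any two-dimensional $U$ must kill a nonzero vector, which then lands in $Rad(A)\oplus V_+$; positive-semi definiteness of $A$ there, combined with $A|_U\le 0$, forces that vector into $Rad(A)$, contradicting $U\cap Rad(A)=\{0\}$. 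This dimension-count-plus-signature argument is shorter, handles all subspaces $U$ at once, and would generalize immediately to signature $(p,q)$ with the bound $q+1$ in place of~$2$.
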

\begin{proof}
The only ways to possibly construct a two dimensional subspace where $A$ is negative-semi definite is by taking the span of any two vectors from the set $\{v_{-},v_{+}+v_{-},v_{+}-v_{-}\}$ (defined in Claim \ref{claim1}). It is obvious that $A\geq 0$ on $span\{v_{+}+v_{-},v_{-}-v_{-}\}$.\\
Set $W_{\pm}=span\{v_{-},v_{+}\pm v_{-}\}$, then we see that $v_{+}\in W_{\pm}$. Thus, $A$ is not negative-semi definite on $W_{\pm}$.

Therefore, all our attempts to construct such a subspace fail and the statement of the claim follows.
\end{proof}

\begin{claim}\label{claim3}
If $\lambda-\bar{\lambda}\neq 0$ and $W_{(\lambda)}=\Re(V_{(\lambda)})$ then $dim_{\R}W_{(\lambda)}$ has even dimension.
\end{claim}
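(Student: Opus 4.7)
The plan is to exhibit $W_{(\lambda)}$ as the image of a natural $\R$-linear isomorphism from $V_{(\lambda)}$ (regarded as a real vector space), so that $\dim_\R W_{(\lambda)}$ automatically equals $\dim_\R V_{(\lambda)} = 2\dim_\C V_{(\lambda)}$, which is manifestly even.

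The candidate map is the real-part map $\Phi : V_{(\lambda)} \to V$ defined by $\Phi(v) = \tfrac{1}{2}(v + \overline{v})$. It is manifestly $\R$-linear, and by the definition of $W_{(\lambda)}$ its image is $W_{(\lambda)}$, so surjectivity onto $W_{(\lambda)}$ is automatic. The key step is injectivity. If $\Phi(v) = 0$ then $\overline{v} = -v$. Because $\op{F}$ is a real operator, complex conjugation sends $V_{(\lambda)}$ onto $V_{(\bar{\lambda})}$, so $\overline{v} \in V_{(\bar{\lambda})}$ while $-v \in V_{(\lambda)}$. The hypothesis $\lambda - \bar{\lambda} \neq 0$ guarantees that the generalized eigenspaces for these two distinct eigenvalues intersect trivially, i.e. $V_{(\lambda)} \cap V_{(\bar{\lambda})} = \{0\}$, forcing $v = 0$.

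With $\Phi$ established as an $\R$-linear isomorphism onto $W_{(\lambda)}$, I conclude that $\dim_\R W_{(\lambda)} = \dim_\R V_{(\lambda)} = 2\dim_\C V_{(\lambda)}$, which is even. I do not anticipate a serious obstacle here; the only point demanding care is the bookkeeping between real and complex structures, since $\Phi$ is $\R$-linear but not $\C$-linear, and one must consistently distinguish the real dimension of $V_{(\lambda)}$ (viewed as a real subspace of $V^{\C}$) from its complex dimension.
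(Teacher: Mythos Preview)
Your proof is correct. Both arguments reach the same endpoint, $\dim_\R W_{(\lambda)} = 2\dim_\C V_{(\lambda)}$, and both ultimately rest on the fact that $V_{(\lambda)}\cap V_{(\bar\lambda)}=\{0\}$ when $\lambda\neq\bar\lambda$, but the packaging differs. The paper proceeds in two stages: it first verifies that $\Im(V_{(\lambda)})=W_{(\lambda)}$ (via the substitution $v\mapsto iv$) and then assembles the complexification identity $W_{(\lambda)}\oplus iW_{(\lambda)}=V_{(\lambda)}\oplus V_{(\bar\lambda)}$, reading off the dimension count from that decomposition. Your route is more economical: a single $\R$-linear map $\Phi=\Re$ whose injectivity is exactly the statement $V_{(\lambda)}\cap V_{(\bar\lambda)}=\{0\}$, with no need to treat real and imaginary parts separately. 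What you gain is brevity; what the paper's version gains is the explicit structural fact that $W_{(\lambda)}$ complexifies to $V_{(\lambda)}\oplus V_{(\bar\lambda)}$, which can be convenient in later arguments.
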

\begin{proof}
Assume $\lambda-\bar{\lambda}\neq 0$, we know that \begin{align*}W_{(\lambda)}=\Re(V_{(\lambda)})&=\{v+\bar{v}\;|\;v\in V_{(\lambda)}\}\\
\Im(V_{(\lambda)})&=\{-i(v-\bar{v})\;|\;v\in V_{(\lambda)}\}\end{align*}
We will first show that $W_{(\lambda)}\oplus i\Im(V_{(\lambda)})=V_{(\lambda)}\oplus V_{(\bar{\lambda})}$.

It is true by definition that $W_{(\lambda)}\oplus i\Im(V_{(\lambda)})\subseteq V_{(\lambda)}\oplus V_{(\bar{\lambda})}$.\\
Now if $v\in V_{(\lambda)}$, then write $v=a+ib$ and $\bar{v}=a-ib$, \\so $a=\frac{1}{2}(v+\bar{v})\in W_{(\lambda)}$ and $b=\frac{1}{2i}(v-\bar{v})\in\Im(V_{(\lambda)})$.\\ 
Therefore, $v=a+ib\in W_{(\lambda)}\oplus i\Im(V_{(\lambda)})\Rightarrow V_{(\lambda)}\oplus V_{(\bar{\lambda})}\subseteq W_{(\lambda)}\oplus i\Im(V_{(\lambda)})$.

Now will show that $\Im(V_{(\lambda)})=W_{(\lambda)}$.

Let $x\in W_{(\lambda)}$ i.e. $x=\Re v$ with $v\in V_{(\lambda)}$. \\Then $w=iv\in V_{(\lambda)}$ and $\Im w=\frac{1}{2i}(w-\bar{w})=\frac{1}{2}(v+\bar{v})=\Re v=x$. \\Therefore, $W_{(\lambda)}\subseteq \Im(V_{(\lambda)})$ and we establish equality since both subspaces have the same dimension.

Finally, we have $W_{(\lambda)}\oplus iW_{(\lambda)}=V_{(\lambda)}\oplus V_{(\bar{\lambda})}$.
Since $\lambda+\bar{\lambda}\neq 0$, $2\;dim_{\R}W_{(\lambda)}=2\;dim_{\R}V_{(\lambda)}$.\\
Therefore, $dim_{\R}W_{(\lambda)}=2\;dim_{\C}V_{(\lambda)}$ is even.
\end{proof}

\begin{theorem}[Hörmander \cite{Horm}]\label{horm2}
There exists a symplectic basis \\$\{x_1,x_2,\dots,x_n,y_1,y_2,\dots,y_n\}$ of $V$ such that for $v=\sum_{j=1}^{n}(s_jx_j+t_jy_j)\in V$, we have \\$f(v)=\sum_{j=1}^{k}\mu_j (s_j^2+t_j^2)+\sum_{j=k+1}^{k+l}s_j^2+q(s,t)$,
where $\mu_j>0$\\ and either $k+l<n-1$ and $q(s,t)=s_n^2-2t_{n-1}t_n$ \\or $k+l<n$ and $q(s,t)=-s_n^2$ or $q(s,t)=2\lambda s_n t_n$, $\lambda>0$.
\end{theorem}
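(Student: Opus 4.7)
The plan is to adapt the inductive ``peeling'' strategy of Theorems \ref{will} and \ref{horm1} to the hyperbolic signature, using Claims \ref{claim1}--\ref{claim3} to confine the single negative direction of $A$ to one $\op{F}$-invariant block of one of three canonical normal forms. The new phenomena compared to the positive semi-definite case are that (i) $\op{F}$ may now admit a nonzero real eigenvalue, since the proof of Lemma \ref{lem7} relied crucially on Proposition \ref{prop1}, and (ii) a Jordan chain of $\op{F}$ at $0$ may contain a vector on which $A$ is negative.

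First I would set up with the Jordan decomposition $V^\C = V_{(0)} \oplus \bigoplus_{\lambda\neq 0}V_{(\lambda)}$. Lemma \ref{lem9} still applies unchanged (its proof uses only $\omega$-antisymmetry of $\op{F}$), so $V_{(\lambda)}$ and $V_{(-\lambda)}$ are $\omega$-dual and $\omega$ restricts to a symplectic form on $V_{(0)}$. I would then run the reductions of Theorem \ref{horm1} wherever they apply: for each purely imaginary pair $\pm i\mu_j$ ($\mu_j>0$) whose real block $B_{\lambda_j}$ carries $A$ positive-definitely, apply Lemma \ref{lem3} to extract an $\op{F}$-invariant symplectic plane $W_j=span\{x_j,y_j\}$ contributing $\mu_j(s_j^2+t_j^2)$; inside $V_{(0)}$, while a vector $y$ with $\op{F}^2y=0$ and $A(y,y)>0$ exists, extract $span\{y,\op{F}y\}$ contributing $s_j^2$; while a pair $u,v\in Ker(\op{F})$ with $\omega(u,v)\neq 0$ exists, extract $span\{u,v\}$ contributing $0$. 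Each extraction is followed by passage to the symplectic complement, which is $\op{F}$-invariant by the argument of Theorem \ref{will}.

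Once these positive and null reductions cannot be continued, the residual $\op{F}$-invariant symplectic subspace $V'$ must contain the single negative direction of $A$. By Claims \ref{claim1} and \ref{claim2}, this forces $V'$ to contain exactly one of three block types, for each of which I would construct a normalized Jordan/symplectic basis and verify the normal form via $A(\cdot,\cdot)=\omega(\cdot,\op{F}\cdot)$:
\begin{enumerate}[label=(\alph*)]
    \item a single size-$4$ Jordan block of $\op{F}$ at eigenvalue $0$, with $A$ of signature $(2,1)$ on the block and one radical direction, producing $q(s,t)=s_n^2-2t_{n-1}t_n$;
    \item a single size-$2$ Jordan block of $\op{F}$ at eigenvalue $0$ whose chain vector $y$ satisfies $A(y,y)<0$, producing $q(s,t)=-s_n^2$;
    \item a two-dimensional symplectic plane spanned by real eigenvectors of $\op{F}$ for eigenvalues $\pm\lambda$, each one-dimensional and mutually $\omega$-dual by Lemma \ref{lem9}, producing $q(s,t)=2\lambda s_n t_n$.
\end{enumerate}
Claim \ref{claim2} rules out two distinct obstructions coexisting, since any two would yield a two-dimensional subspace on which $A$ is negative semi-definite and transverse to $Rad(A)$. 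After extracting the single obstruction block, its symplectic complement is positive semi-definite, so iterating the reductions of Steps 2--3 finishes the decomposition; concatenating the bases yields the required symplectic basis of $V$, with the obstruction indices placed last to match the stated form.

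The main obstacle is the trichotomy at Step 4. Mutual exclusion of the three hyperbolic obstructions follows cleanly from Claim \ref{claim2}, but one must also rule out larger Jordan structures: for an $\omega$-antisymmetric nilpotent block of size $2m$ at $0$, the induced form $A=\omega(\cdot,\op{F}\cdot)$ has nontrivial signature imbalance growing with $m$, so a block of size $\geq 6$ at $0$ would contribute at least two to the negative index, contradicting the hyperbolic hypothesis; a parallel computation on a size-$2$ Jordan block for a nonzero real eigenvalue shows that the full $V_{(\lambda)}\oplus V_{(-\lambda)}$ would then have signature $(2,2)$, also violating hyperbolicity, so case (c) is restricted to simple eigenvalues. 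These signature counts, together with the explicit basis normalizations in (a)--(c), are the non-routine core of the proof.
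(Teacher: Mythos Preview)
Your peeling-and-trichotomy outline is the same architecture as the paper's proof, and invoking Claims \ref{claim1}--\ref{claim3} to isolate a single hyperbolic block is exactly the paper's mechanism. Two points in your sketch are genuine omissions, however. First, you pass directly from ``$\op{F}$ may admit a nonzero real eigenvalue'' to case (c) without excluding eigenvalues $\lambda$ with both $\Re\lambda\neq 0$ and $\Im\lambda\neq 0$; the paper closes this by noting (via the computation underlying Lemma \ref{lem7}) that $A$ vanishes on $W_{(\lambda)}=\Re(V_{(\lambda)})$, so Claim \ref{claim1} forces $\dim W_{(\lambda)}\leq 1$, while Claim \ref{claim3} forces $\dim W_{(\lambda)}$ even when $\lambda\neq\bar\lambda$---hence $\lambda\in\R$ and $\dim_\C V_{(\lambda)}=1$. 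Your signature remark for (c) only applies once $\lambda$ is already known to be real. Second, you tacitly assume a purely imaginary block never carries the negative direction and so is always peeled off before reaching $V'$; the paper checks this explicitly: on $span\{x,y\}$ with $\op{F}x=-\mu y$, $\op{F}y=\mu x$ one has $A(x,x)=A(y,y)=-\mu\,\omega(x,y)$ and $A(x,y)=0$, so the wrong sign of $\omega(x,y)$ would make $A\leq 0$ on a two-plane transverse to $Rad(A)$, contradicting Claim \ref{claim2}.

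For bounding the nilpotent block at $0$, the paper does not use your signature-imbalance heuristic but argues more concretely: from $A(\op{F}^jx,\op{F}^kx)=\omega(\op{F}^jx,\op{F}^{k+1}x)=0$ whenever $j+k+1\geq N$, one reads off that for $N\geq 5$ the plane $span\{\op{F}^{N-3}x,\op{F}^{N-2}x\}$ is $A$-isotropic and meets $Ker(\op{F})=span\{\op{F}^{N-1}x\}$ trivially, contradicting Claim \ref{claim1}; together with $N$ even and $N\neq 2$ (the latter because Case~3's assumption (ii) would force $\omega(x,\op{F}x)=0$, contradicting symplecticity of $V_{(0)}$) this pins down $N=4$. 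The paper then verifies $\omega(x,\op{F}^3x)<0$ by another appeal to Claim \ref{claim2} and adjusts $x\mapsto x+t\op{F}^2x$ to kill $\omega(x,\op{F}x)$ before writing down the explicit symplectic basis $\{-\op{F}^3y,-\op{F}y,y,\op{F}^2y\}$ yielding $s_n^2-2t_{n-1}t_n$. Your plan would go through once these details are filled in, but the claimed ``signature imbalance growing with $m$'' is not a substitute for this computation without further work.
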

\begin{proof}
First of all, we know that $V^\C$ can be Jordan-decomposed into the generalized eigenspaces of $\op{F}$. Namely,
$$V^\C=Y\oplus V_{(0)}$$
where $Y=\bigoplus_{j=1}^{k} (V_{(\lambda_j)}\oplus V_{(\bar{\lambda}_j)})$.\\
Take $v\in V_{(\lambda)}$. 

If $\Re\lambda\neq 0$, then it follows from Lemma \ref{lem7} that $A$ vanishes on $W_{(\lambda)}$ and since $W_{(\lambda)}\subseteq V_{(\lambda)}\cap Ker(\op{F})=\{0\}$, we know from Claim \ref{claim1} that $W_{(\lambda)}$ is a one dimensional $A$-isotropic subspace. Using Claim \ref{claim3}, we must have $\lambda=\bar{\lambda}$, otherwise $dim_{\C}V_{(\lambda)}=\frac{1}{2}$ which is not possible.\\
Therefore, $dim_{\C}V_{(\lambda)}=1$ and $\lambda\in\R$.\\
Now choose a real vector $x\in V_{(\lambda)}$ and a real vector $y\in V_{(\bar{\lambda})}$ such that $\omega(x,y)=1$. Then $W=span\{x,y\}$ is an $\op{F}$-invariant symplectic plane and we have $\op{F}x=\lambda x$, $\op{F}y=-\lambda y$. If $z=s_1 x + t_1 y$, we have:
$$A(z,z)=2\lambda s_1 t_1$$
And now, we repeat this procedure on $W^\omega\subset V$.

If $\lambda=i\mu$, $\mu>0$ and $\op{F}v=i\mu v$ then take $x,y\in V$ such that $v=x+iy\in V_{(\lambda)}$. We have $\op{F}x=-\mu y$ and $\op{F}y=\mu x$.\\
Using Claim \ref{claim2}, $A$ cannot be negative-semi definite on the two dimensional subspace spanned by $\{x,y\}$ that intersects trivially with $Ker(\op{F})$.\\
Thus $A(x,x)=-\mu\omega(x,y)> 0$ and $\omega(x,y)<0$.
$W=span\{x,y\}$ is an $\op{F}$-invariant symplectic plane. We normalize $x$ \& $y$ such that $\omega(x,y)=1$. If $z=s_1 x + t_1 y$ then:
$$A(z,z)=\mu (s_1^2+t_1^2)$$
And now, we repeat this procedure on $W^\omega\subset V$.

If $v\in V_{(0)}$, then take $y=\Re v$. We will consider three exhaustive cases.

Case 1: Suppose $\op{F^2}y=0$, $A(y,y)=\omega(y,\op{F}y)\neq 0$.\\
Then $\op{F}y\neq 0$, so $y\notin Ker(\op{F})$. And $W=span\{y,\op{F}y\}$, with the appropriate normalization, is an $\op{F}$-invariant symplectic plane in $V$.\\
If $w=t_1 y+t_2 \op{F}y\in W$ then: $$A(w,w)=\omega(w,\op{F}w)=t_1^2$$
And now, we repeat this process on $W^\omega\subset V$.

Case 2: Suppose we can pick two real vectors $v,w\in Ker(\op{F})$ such that $\omega(v,w)\neq 0$.\\
Then $W=span\{v,w\}$, with the appropriate normalization, is an $\op{F}$-invariant symplectic plane in $V$.\\
If $z=t_1 v + t_2 w\in W$ then: $$A(z,z)=0$$
And now, we repeat this process on $W^\omega\subset V$.

Case 3: \begin{enumerate}[label=(\roman*)]
     \item $Ker(\op{F})$ is isotropic;
    \item If $v\in Ker(\op{F^2})$ then $A(v,v)=0$
\end{enumerate}
From Claim \ref{claim2}, we find that $Ker(\op{F^2})/Ker(\op{F})$ is one dimensional.\\
Now consider the short exact sequence: $$0\to Ker(\op{F})\xhookrightarrow{}Ker(\op{F^2})\to Ker(\op{F})\cap Im(\op{F})\to 0$$ 
From Lemma \ref{lem6}, we know that $Im(\op{F})=(Ker(\op{F}))^\omega$, therefore, \\$Ker(\op{F})\cap Im(\op{F})=Ker(\op{F})\cap(Ker(\op{F}))^\omega=Ker(\op{F})$, from assumption (i).\\
We now have $Ker(\op{F^2})/Ker(\op{F})\cong Ker(\op{F})$, therefore $Ker(\op{F})$ has dimension one.

It now follows from the Jordan canonical form of $\op{F}$ that $\Re(V_{(0)})$ has the basis \\$\{x,\op{F}x,\dots,\op{F^{N-1}}x\}$ while $\op{F^N}x=0$.\\
Since $V_{(0)}$ is a symplectic subspace, $N$ must be even. 

However, if $N=2$ then the basis is $\{x,\op{F}x\}$ such that $x\in Ker(\op{F^2})$, implying $A(x,x)=\omega(x,\op{F}x)=0$ from assumption (ii). This is a contradiction since $V_{(0)}$ is a symplectic subspace.\\
Therefore, we deduce that $N>2$ must be even.

We know that $A(\op{F^j}x,\op{F^k}x)=\omega(\op{F^j}x,\op{F^{k+1}}x)=0$ if $j+k+1\geq N$.\\
Then: \\$A(\op{F^{N-3}}x,\op{F^{N-2}}x)=0$ if $N\geq 4$; \\$A(\op{F^{N-3}}x,\op{F^{N-3}}x)=0$ if $N\geq 5$ and\\ $A(\op{F^{N-2}}x,\op{F^{N-2}}x)=0$ if $N\geq 3$.\\
This means that $A$ vanishes on $span\{\op{F^{N-3}}x,\op{F^{N-2}}x\}$ if $N\geq 5$.\\ We have $span\{\op{F^{N-3}}x,\op{F^{N-2}}x\}\cap Ker(\op{F})=\{0\}$ since $span\{\op{F^{N-1}}x\}=Ker(\op{F})$. 

Therefore, we have found a two dimensional subspace where $A$ vanishes which intersects trivially with $Rad(A)$. Due to Claim \ref{claim2}, we know that this is not possible.\\Hence it must be the case that $2<N<5$. $N$ being even, the only possibility is $N=4$.

We must also have $\omega(x,\op{F^3}x)<0$.\\
Suppose, for the sake of contradiction, that $\omega(x,\op{F^3}x)\geq 0$. Then \\
$\omega(\op{F}x,\op{F^2}x)=-\omega(x,\op{F^3}x)\leq 0$.\\
Let $W=span\{\op{F}x,\op{F^2}x\}$. \\Then $W\cap Ker(\op{F})=\{0\}$ and also $A\leq 0$ on $W$:\\
$A(\op{F}x,\op{F^2}x)=\omega(\op{F}x,\op{F^3}x)=0$\\
$A(\op{F}x,\op{F}x)=\omega(\op{F}x,\op{F^2}x)\leq 0$\\
$A(\op{F^2}x,\op{F^2}x)=\omega(\op{F^2}x,\op{F^3}x)=0$\\
This is a contradiction due to Claim \ref{claim3}.

We normalize so that $\omega(x,\op{F^3}x)=-1$ and set $y=x+t\op{F^2}x$.\\
Then $\omega(y,\op{F}y)=\omega(x,\op{F}x)-2t=0$ if $t=\frac{1}{2}\omega(x,\op{F}x)$.\\
The $\op{F}$-invariant symplectic subspace $S=Re(V_{(0)})$ is thus spanned by the symplectic basis:\\
$\{e_1=-\op{F^3}y,e_2=-\op{F}y,f_1=y,f_2=\op{F^2}y\}$.\\
If $z=s_1 e_1+s_2 e_2+t_1 f_1+t_2 f_2$, then:
$$A(z,z)=s_2^2-2t_1 t_2$$
And now, we repeat this procedure on $S^\omega\subset V$.

Finally, after exhausting the dimension of $V$, we get a symplectic basis \\$\{x_1,x_2,\dots,x_n,y_1,y_2,\dots,y_n\}$ of $V$ such that for $v=\sum_{j=1}^{n}(s_jx_j+t_jy_j)\in V$, we have $$f(v)=\sum_{j=1}^{k}\mu_j (s_j^2+t_j^2)+\sum_{j=k+1}^{k+l}s_j^2+q(s,t),\;
\text{where}\; \mu_j>0$$ and either $k+l<n-1$ and $$q(s,t)=s_n^2-2t_{n-1}t_n$$ or $k+l<n$ and $$q(s,t)=-s_n^2\;\text{or} \;q(s,t)=2\lambda s_n t_n,\;\text{where}\;\lambda>0.$$
\end{proof}

The matrix representing $A$ in the symplectic basis given by Theorem \ref{horm2} is 

$$\begin{bmatrix}
    \op{A}_n & \op{C}_n\\
    \op{C}_n & \op{B}_n
\end{bmatrix}$$
Now, if $k+l<n-1$ and $q(s,t)=s_n^2-2t_{n-1}t_n$ then

$\op{A}_n= diag(\mu_1,\dots,\mu_k,\underbrace{1,\dots,1}_{l},\underbrace{0,\dots,0}_{n-k-l-1},1)$ and $\op{B}_n=\begin{bmatrix}
\mu_1& & & & & & & \\
& &\mu_2& & & & & & \\
& & & \ddots& & & & & \\
& & & &\mu_k& & & & \\
& & & & &0& & & \\
& & & & & &\ddots& &\\
& & & & & & &0& -1\\
& & & & & & & -1& 0
\end{bmatrix}$ \vspace{0.3cm}\\ are $n\times n$ block matrices where $\mu_i>0$ for $i=1,2,\dots,k$ are the symplectic eigenvalues of the non-degenerate part of $\op{F}$, the blank entries in $\op{B}_n$ are all zero and $\op{C}_n=\op{0}_n$ is the $n\times n$ zero matrix.

If $k+l<n$ and $q(s,t)=-s_n^2$ then

$\op{A}_n= diag(\mu_1,\dots,\mu_k,\underbrace{1,\dots,1}_{l},\underbrace{0,\dots,0}_{n-k-l-1},-1)$ and $\op{B}_n= diag(\mu_1,\dots,\mu_k,\underbrace{0,\dots,0}_{n-k})$
\vspace{0.3cm}\\ are $n\times n$ block matrices where $\mu_i>0$ for $i=1,2,\dots,k$ are the symplectic eigenvalues of the non-degenerate part of $\op{F}$ and $\op{C}_n=\op{0}_n$ is the $n\times n$ zero matrix.

If $k+l<n$ and $q(s,t)=2\lambda s_n t_n$, $\lambda>0$ then

$\op{A}_n= diag(\mu_1,\dots,\mu_k,\underbrace{1,\dots,1}_{l},\underbrace{0,\dots,0}_{n-k-l})$ and $\op{B}_n= diag(\mu_1,\dots,\mu_k,\underbrace{0,\dots,0}_{n-k})$\vspace{0.3cm}\\ are $n\times n$ block matrices where $\mu_i>0$ for $i=1,2,\dots,k$ are the symplectic eigenvalues of the non-degenerate part of $\op{F}$ and $\op{C}_n=diag(\underbrace{0,\dots,0}_{n-1},\lambda)$ is an $n\times n$ block matrix.

\section{Results}


\begin{proposition}\label{lem4}
Given a positive-definite quadratic form $f$ on $V$, let $P$ be a subspace of $V^{\C}$ then $P$ is $\op{F}$-invariant iff \begin{equation}\label{eq:2}
P=(B_{\lambda_1}\cap P)\oplus (B_{\lambda_2}\cap P)\oplus\cdots\oplus (B_{\lambda_k}\cap P)
\end{equation}
\end{proposition}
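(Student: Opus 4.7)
My plan is to leverage the diagonalizability of $\op{F}$ on $V^{\C}$ established by Lemmas~\ref{lem1}--\ref{lem3} and Theorem~\ref{will}: namely, $V^{\C}=\bigoplus_{j=1}^{k}B_{\lambda_j}=\bigoplus_{j=1}^{k}(V_{\lambda_j}\oplus V_{\bar{\lambda}_j})$, each $B_{\lambda_j}$ is an $\op{F}$-invariant summand, and $\op{F}$ acts as multiplication by $\pm i\mu_j$ on $V_{\pm i\mu_j}$. Both directions of the proposition will be deduced by tracking how $P$ intersects this ambient eigenspace decomposition.

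For the forward direction, suppose $P$ is $\op{F}$-invariant. Since $\op{F}$ is diagonalizable on $V^{\C}$ and $P$ is invariant, $\op{F}|_{P}$ is also diagonalizable, so $P=\bigoplus_{\lambda}(V_{\lambda}\cap P)$; grouping conjugate pairs yields
\begin{equation*}
P=\bigoplus_{j=1}^{k}\bigl((V_{\lambda_j}\cap P)\oplus(V_{\bar{\lambda}_j}\cap P)\bigr).
\end{equation*}
To match~\eqref{eq:2} it then suffices to show $(V_{\lambda_j}\cap P)\oplus(V_{\bar{\lambda}_j}\cap P)=B_{\lambda_j}\cap P$. The $\subseteq$ inclusion is immediate; for $\supseteq$, take $v\in B_{\lambda_j}\cap P$ and write $v=a+b$ with $a\in V_{\lambda_j}$ and $b\in V_{\bar{\lambda}_j}$. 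Combining $v=a+b$ with $\op{F}v=i\mu_j(a-b)$ gives
\begin{equation*}
a=\tfrac{1}{2}\bigl(v-\tfrac{i}{\mu_j}\op{F}v\bigr),\qquad b=\tfrac{1}{2}\bigl(v+\tfrac{i}{\mu_j}\op{F}v\bigr),
\end{equation*}
both of which lie in $P$ because $v,\op{F}v\in P$ by invariance.

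For the reverse direction, assume~\eqref{eq:2} and take $v\in P$ with unique expansion $v=\sum_{j}v_j$, $v_j\in B_{\lambda_j}\cap P$. Since $B_{\lambda_j}$ is $\op{F}$-invariant, $\op{F}v_j\in B_{\lambda_j}$, and $\op{F}v=\sum_j\op{F}v_j$ is the unique $B_{\lambda_j}$-expansion of $\op{F}v$; placing $\op{F}v$ in $P$ therefore reduces to showing $\op{F}v_j\in P$ for every $j$. I would handle this by upgrading the coarse decomposition~\eqref{eq:2} to the finer splitting $B_{\lambda_j}\cap P=(V_{\lambda_j}\cap P)\oplus(V_{\bar{\lambda}_j}\cap P)$ inside each summand, after which $\op{F}v_j$ is a $\C$-linear combination of eigenvectors of $\op{F}$ that already lie in $P$ and so stays in $P$.

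The hard part is exactly this upgrade step: the forward direction's projection formula requires $\op{F}v_j\in P$ in advance, which is what is being concluded, so a direct reuse of it would be circular. My plan for closing the gap is to observe that~\eqref{eq:2} is equivalent to $P$ being stable under the spectral projectors $\pi_j:V^{\C}\to B_{\lambda_j}$, and that these projectors commute with $\op{F}$ because each $B_{\lambda_j}$ is $\op{F}$-invariant; running the same type of argument inside each $B_{\lambda_j}$---treated as an ambient symplectic space on which $\op{F}$ has only the two eigenvalues $\pm i\mu_j$---should refine the stability from $\{B_{\lambda_j}\}$ to $\{V_{\lambda_j},V_{\bar{\lambda}_j}\}$, at which point $\op{F}$-invariance of $P$ is automatic.
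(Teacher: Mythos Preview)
Your forward direction is correct and is essentially the paper's argument: the paper packages the projectors onto the individual eigenspaces $V_{\pm i\mu_j}$ as Lagrange interpolating polynomials $p_j(\op{F}),p_{k+j}(\op{F})$, while you write the same projectors explicitly as $\tfrac{1}{2}(v\mp\tfrac{i}{\mu_j}\op{F}v)$ on each block.

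The reverse direction, however, has a genuine gap, and your proposed fix cannot close it. Your recursion plan inside a single block $B_{\lambda_j}$ is vacuous: restricted to $B_{\lambda_j}$ there is only \emph{one} symplectic eigenvalue, so the hypothesis~\eqref{eq:2} collapses to the tautology $B_{\lambda_j}\cap P=B_{\lambda_j}\cap P$ and yields no information whatsoever about how $P$ meets $V_{\lambda_j}$ versus $V_{\bar\lambda_j}$. In fact the implication ``\eqref{eq:2} $\Rightarrow$ $P$ is $\op{F}$-invariant'' is false as stated. Take $n=1$: then $k=1$, $B_{\lambda_1}=V^{\C}=\C^2$, and \emph{every} subspace $P\subseteq V^{\C}$ satisfies~\eqref{eq:2} trivially; yet the line $P=\C e_1$ spanned by the first standard basis vector is not $\op{F}$-invariant, since $\op{F}e_1=-\mu_1 e_2\notin P$.

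The paper's own proof of this direction fails at exactly the same spot: it writes ``Take $u_j=u_{j_1}+u_{j_2}$ where $u_{j_1}\in E_{\lambda_j}\cap P$ and $u_{j_2}\in E_{\bar\lambda_j}\cap P$'' and never explains why the eigencomponents of $u_j$ lie in $P$; the counterexample above shows they need not. The proposition becomes true, and both arguments go through verbatim, if one replaces the symplectic eigenspaces $B_{\lambda_j}$ in~\eqref{eq:2} by the ordinary eigenspaces $V_{\lambda_j},V_{\bar\lambda_j}$ (so that the decomposition runs over $2k$ pieces rather than $k$); that is presumably the intended statement.
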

\begin{proof}
Assume the decomposition in (\ref{eq:2}) holds. \\Let $u\in P\subseteq V^{\C}\neq 0$ then
$u=u_1+u_2+\cdots+u_k$ with $u_j\in B_{\lambda_j}\cap P$. \\Take $u_j=u_{j_1}+u_{j_2}$ where $u_{j_1}\in E_{\lambda_j}\cap P$ and $u_{j_2}\in E_{\bar{\lambda}_j}\cap P$.\\
We have $\op{F}u_{j_1}=i\mu_j u_{j_1}\in P$ and $\op{F}u_{j_2}=-i\mu_j u_{j_2}\in P$. Therefore, $u_j\in B_{\lambda_j}\cap P$.\\
Conversely, assume that $P$ is $\op{F}$-invariant. Let $u\in P$. We must show that in $u=u_1+u_2+\cdots+u_k$ each $u_j=u_{j_1}+u_{j_2}\in E_{\lambda_j}\oplus E_{\bar{\lambda}_j}=B_{\lambda_j}$ is contained in $P$. \\
Let $p_1,\dots,p_k,p_{k+1},p_{2k}$ be the Lagrange interpolating polynomials for $i\mu_1,\dots,i\mu_k,-i\mu_1,-i\mu_k\in\C$. Then $p_j(i\mu_l)=\delta_{jl}$\;;\;$p_{k+j}(-i\mu_l)=\delta_{jl}$ and  $p_{k+j}(i\mu_l)=p_{j}(-i\mu_l)=0$ for all $1\leq j,l\leq k$. \\
So, for $1\leq j\leq k$
$$p_j(\op{F})u=p_j(\op{F})(u_{1_1}+u_{1_2}+\dots+u_{k_1}+u_{k_2})=u_{j_1}$$
$$p_{k+j}(\op{F})u=p_{k+j}(\op{F})(u_{1_1}+u_{1_2}+\dots+u_{k_1}+u_{k_2})=u_{j_2}$$
Since $P$ is $\op{F}$-invariant, $P$ is $p_j(\op{F})$-invariant for all $1\leq j\leq 2k$, so this shows that \\
$u_j=u_{j_1}+u_{j_2}=(p_j(\op{F})+p_{k+j}(\op{F}))u\in P$.
\end{proof}

Now suppose that $V$ is equipped with a second positive-definite quadratic form $g$. Let $B$ be the symmetric bilinear form corresponding to $g$ and $\op{G}$ the linear map defined by $B(v, w)=\omega (v, \op{G}w)$.
\begin{lemma}\label{lem5}
The maps $\op{F}$ and $\op{G}$ commute i.e. $[\op{F}, \op{G}]=0$ iff the functions $f$ and $g$ Poisson commute i.e. $\{f, g\}=0$.
\end{lemma}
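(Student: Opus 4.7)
The plan is to show that the Poisson bracket $\{f, g\}$ is, up to a non-zero multiplicative constant, the quadratic form $v \mapsto \omega(v, [\op{F}, \op{G}]v)$, and then argue that this quadratic form vanishes identically if and only if the commutator itself vanishes. First I would compute the Hamiltonian vector field of $f(v) = A(v,v)$: from $df_v(w) = 2A(v,w) = 2\omega(v, \op{F}w)$ together with the $\omega$-antisymmetry of $\op{F}$ established earlier, one obtains $X_f(v) = -2\op{F}v$, and analogously $X_g(v) = -2\op{G}v$. Then
\[
\{f, g\}(v) \;=\; \omega(X_f(v), X_g(v)) \;=\; 4\omega(\op{F}v, \op{G}v).
\]
Splitting this symmetrically --- applying the $\omega$-antisymmetry of $\op{F}$ to the first slot and of $\op{G}$ to the second --- it rearranges to $\{f, g\}(v) = -2\omega(v, [\op{F}, \op{G}]v)$.

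The $(\Leftarrow)$ direction is then immediate. For $(\Rightarrow)$, I would first check that $[\op{F}, \op{G}]$ is itself $\omega$-antisymmetric: since the $\omega$-adjoint of each of $\op{F}, \op{G}$ is its negative, the $\omega$-adjoint of $\op{F}\op{G}$ is $\op{G}\op{F}$, and hence the $\omega$-adjoint of $[\op{F}, \op{G}]$ equals $-[\op{F}, \op{G}]$. (Equivalently, $\mathfrak{sp}(V, \omega)$ is closed under commutators.) Next, for any $\omega$-antisymmetric $X$, the bilinear form $(u, w) \mapsto \omega(u, Xw)$ is actually \emph{symmetric}, since
\[
\omega(u, Xw) \;=\; -\omega(Xu, w) \;=\; \omega(w, Xu).
\]
Consequently, if the associated quadratic form $v \mapsto \omega(v, Xv)$ vanishes identically, polarization forces $\omega(u, Xw) = 0$ for every $u, w$, and non-degeneracy of $\omega$ yields $X = 0$. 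Applying this to $X = [\op{F}, \op{G}]$ closes the argument.

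The main subtlety --- and where I expect care is needed --- is this last step of passing from $\omega(v, Xv) \equiv 0$ to $X = 0$. This would be \emph{false} if $X$ were $\omega$-symmetric, in which case the quadratic form vanishes automatically; it is therefore essential to have first verified that $[\op{F}, \op{G}]$ lies in $\mathfrak{sp}(V, \omega)$ rather than in its $\omega$-symmetric complement. Beyond this, the proof reduces to a short bookkeeping exercise once the sign conventions for the Hamiltonian vector field and the Poisson bracket are pinned down; positive-definiteness of $f$ and $g$ is not actually used in the equivalence itself, only to guarantee that the operators $\op{F}, \op{G}$ exist and lie in $\mathfrak{sp}(V, \omega)$.
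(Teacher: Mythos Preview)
Your proof is correct and more self-contained than the paper's. The paper's argument is a two-line appeal to a standard fact: it observes that $\op{F}$ and $\op{G}$ are, up to a factor of $2$, the Hamiltonian vector fields $X_f$ and $X_g$, and then invokes the general principle that two functions Poisson commute iff their Hamiltonian vector fields commute (implicitly via the identity $[X_f,X_g]=\pm X_{\{f,g\}}$). You instead compute $\{f,g\}(v)$ explicitly as $-2\,\omega(v,[\op{F},\op{G}]v)$ and then close the loop with the polarization argument, having first checked that $[\op{F},\op{G}]\in\mathfrak{sp}(V,\omega)$ so that the associated bilinear form is symmetric. The paper's route is shorter if one is willing to import the Lie-theoretic identity as a black box; your route is more elementary, requires no external input beyond the definitions, and makes transparent exactly where non-degeneracy of $\omega$ and the $\omega$-antisymmetry of the commutator enter. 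Your closing remark that positive-definiteness plays no role in the equivalence itself is also accurate and worth noting.
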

\begin{proof}
The linear maps $\op{F}$ and $\op{G}$, viewed as vector fields on $V$, up to a factor of 2, are nothing but the Hamiltonian vector fields of the quadratic functions $f$ and $g$ respectively. Using the fact that two functions \textit{Poisson} commute iff their Hamiltonian vector fields commute, we finish the proof.
\end{proof}
\begin{rem} On the standard symplectic vector space $V=\R^{2n}$, we have $\op{F}=\op{JM_A}$ and $\op{G}=\op{JM_B}$, so $\{f, g\}=0$ iff $[\op{JM_A}, \op{JM_B}]=0$.
\end{rem}
A standard result from linear algebra says that two diagonalizable maps are simultaneously diagonalizable iff they commute. The maps $\op{F}$ and $\op{G}$ are diagonalizable by Williamson's theorem, so from Lemma \ref{lem5}, we deduce that $\{f, g\}=0$ iff $\op{F}$ and $\op{G}$ are simultaneously diagonalizable. In fact, something stronger is true.
\begin{theorem}\label{simul}
Let $f$ and $g$ be positive-definite quadratic forms on $V$. Then $f$ and $g$ Poisson commute iff there exists a symplectic basis $\{x_1, x_2, \dots, x_n, y_1, y_2, \dots, y_n\}$ of $V$ such that 
\begin{align*}
\op{F}x_j&= -\mu_j y_j \;\;\text{and}\;\; \op{F}y_j= \mu_j x_j,\\
\op{G}x_j&= -\gamma_j y_j \;\;\text{and}\;\; \op{G}y_j= \gamma_j x_j
\end{align*}
Here $\{\pm i\mu_1, \pm i\mu_2, \dots, \pm i\mu_n\}$ and $\{\pm i\gamma_1, \pm i\gamma_2, \dots, \pm i\gamma_n\}$ is a list of all eigenvalues of $\op{F}$ and $\op{G}$ respectively, where $\mu_j, \gamma_j>0$ and where each eigenvalue is repeated according to its multiplicity. For $v=\sum_{j=1}^{n}(s_j x_j +t_j y_j)\in V$, we have
$$f(v)=\sum_{j=1}^{n}\mu_j (s_j^2+t_j^2), \;\;\; g(v)=\sum_{j=1}^{n}\gamma_j (s_j^2+t_j^2)$$
\end{theorem}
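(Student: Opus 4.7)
The reverse implication is a routine check: given such a basis, $\op{F}\op{G}x_j=-\gamma_j\mu_j x_j=\op{G}\op{F}x_j$ and similarly on $y_j$, so $[\op{F},\op{G}]=0$, and Lemma \ref{lem5} then yields $\{f,g\}=0$. For the forward implication, assume $\{f,g\}=0$, so $[\op{F},\op{G}]=0$ by Lemma \ref{lem5}. Since both $\op{F}$ and $\op{G}$ are diagonalizable on $V^\C$ with nonzero purely imaginary eigenvalues (Lemma \ref{lem2}), and they commute, they admit a joint eigenspace decomposition $V^\C=\bigoplus_{\alpha,\beta} V^F_\alpha\cap V^G_\beta$, where $V^F_\alpha$ and $V^G_\beta$ denote the $\alpha$- and $\beta$-eigenspaces of $\op{F}$ and $\op{G}$ respectively. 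The plan is to refine the proof of Williamson's theorem against this joint decomposition.

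The crucial step---the only place the joint positive-definiteness of $f$ and $g$ will be used essentially---is to show that the mixed-sign joint eigenspaces vanish, i.e.\ $V^F_{i\mu}\cap V^G_{-i\gamma}=\{0\}$ whenever $\mu,\gamma>0$. If a nonzero $v=x+iy$ lay in this intersection, then equating real and imaginary parts in $\op{F}v=i\mu v$ and $\op{G}v=-i\gamma v$ would give $\op{F}x=-\mu y$, $\op{F}y=\mu x$, $\op{G}x=\gamma y$, $\op{G}y=-\gamma x$, with $x,y$ not both zero (by the argument at the end of Lemma \ref{lem3}). Using $A(u,u)=\omega(u,\op{F}u)$ and $B(u,u)=\omega(u,\op{G}u)$, I then find $f(x)+f(y)=-2\mu\,\omega(x,y)$ and $g(x)+g(y)=+2\gamma\,\omega(x,y)$; positive-definiteness of $f$ forces $\omega(x,y)<0$, while that of $g$ forces $\omega(x,y)>0$, a contradiction. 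Consequently each $V^F_{i\mu_j}\cap V^G_{i\gamma_j}$ pairs only with its complex conjugate $V^F_{-i\mu_j}\cap V^G_{-i\gamma_j}$, and $V^\C$ decomposes into \emph{aligned} joint symplectic eigenspaces on which $\op{F}$ and $\op{G}$ effectively act as positively proportional complex structures.

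With the alignment established, I will mirror the inductive construction from the proof of Theorem \ref{will}. Choose $0\neq v\in V^F_{i\mu_1}\cap V^G_{i\gamma_1}$ and write $v=x+iy$; Lemma \ref{lem3} applied to $\op{F}$ gives $\op{F}x=-\mu_1 y$, $\op{F}y=\mu_1 x$ with $\omega(x,y)<0$, and the same calculation applied to $\op{G}$ (whose eigenvalue on $v$ is $i\gamma_1$ with $\gamma_1>0$) gives $\op{G}x=-\gamma_1 y$, $\op{G}y=\gamma_1 x$. The same real normalization used in Theorem \ref{will} then produces a symplectic pair $\{x_1,y_1\}$ spanning a two-dimensional symplectic plane $W$ that is simultaneously invariant under $\op{F}$ and $\op{G}$ and on which both operators are in Williamson form. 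Since $\op{F}$ and $\op{G}$ are antisymmetric with respect to $\omega$, each preserves $W^\omega$, and $f|_{W^\omega}$ and $g|_{W^\omega}$ remain positive-definite and Poisson-commuting, so induction on $\dim V$ closes the argument; the coordinate formulas for $f(v)$ and $g(v)$ then fall out from the same bilinear computation as in the proof of Theorem \ref{will}, carried out now for each form separately. The main obstacle is the vanishing-of-mixed-signs claim above---once that is in hand, the rest of the proof is a faithful re-enactment of the single-form Williamson argument.
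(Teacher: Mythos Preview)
Your proof is correct and follows the same inductive strategy as the paper: reduce to $[\op{F},\op{G}]=0$ via Lemma~\ref{lem5}, locate a common eigenvector, build an $\op{F}$- and $\op{G}$-invariant symplectic plane $W$, and pass to $W^\omega$. The one substantive difference is that the paper simply \emph{asserts} that a common eigenvector $v$ exists with $\op{F}$-eigenvalue $i\mu$ and $\op{G}$-eigenvalue $i\gamma$ for $\mu,\gamma>0$, whereas you actually prove that the signs must align: your mixed-sign vanishing argument ($V^F_{i\mu}\cap V^G_{-i\gamma}=\{0\}$ via the incompatible inequalities $\omega(x,y)<0$ from $f$ and $\omega(x,y)>0$ from $g$) is exactly what is needed to rule out the case where a common eigenvector has $\op{F}$- and $\op{G}$-eigenvalues of opposite imaginary sign, a case conjugation alone cannot repair. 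So your argument is the paper's argument with that gap explicitly closed.
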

\begin{proof}
Suppose there exists such a basis . The $\op{F}$ and $\op{G}$ commute, so $f$ and $g$ Poisson commute by Lemma \ref{lem5}. Now suppose that $f$ and $g$ Poisson commute, then $\op{F}$ and $\op{G}$ commute by Lemma \ref{lem5}. So, there exists at least one common eigenvector $v\in V^{\C}$ for $\op{F}$ and $\op{G}$ with $\op{F}$-eigenvalue $i\mu$ $(\mu>0)$ and $\op{G}$-eigenvalue $i\gamma$ $(\gamma>0)$. Writing $v=x+iy$ with $x, y \in V$, we see from Lemma \ref{lem3} that $\op{F}x=-\mu y$, $\op{F}y=\mu x$, $\op{G}x=-\gamma y$ and $\op{G}y=\gamma x$, and that $W=span\{x, y\}$ is a symplectic plane in $V$. In fact, after the appropriate normalization of $v$, the vectors $x, y$ are a symplectic basis of $W$. The subspace $W$ is invariant under both $\op{F}$ and $\op{G}$, so its skew-orthogonal complement $W^\omega$ is also invariant under both $\op{F}$ and $\op{G}$. We can now induct over the dimension of $V$ similar to that in the proof of Williamson's theorem to find our desired basis.
\end{proof}

We can generalize this slightly to the positive-semi definite case.
\begin{theorem}\label{simul2}
Let $f$ and $g$ be positive-semi definite quadratic forms on $V$ such that the intersection of their radicals is a symplectic subspace. Then $f$ and $g$ Poisson commute iff there exists a symplectic basis $\{x_1, x_2, \dots, x_n, y_1, y_2, \dots, y_n\}$ of $V$ such that 
\begin{align*}
\op{F}x_j&= -\mu_j y_j \;\;\text{and}\;\; \op{F}y_j= \mu_j x_j,\\
\op{G}x_j&= -\gamma_j y_j \;\;\text{and}\;\; \op{G}y_j= \gamma_j x_j
\end{align*}
Here $\{\pm i\mu_1, \pm i\mu_2, \dots, \pm i\mu_k\}$ and $\{\pm i\gamma_1, \pm i\gamma_2, \dots, \pm i\gamma_l\}$ is a list of all non-zero eigenvalues of $\op{F}$ and $\op{G}$ respectively, where $\mu_j, \gamma_j>0$ and where each eigenvalue is repeated according to its multiplicity. For $v=\sum_{j=1}^{n}(s_j x_j +t_j y_j)\in V$, we have
\begin{equation}f(v)=\sum_{j=1}^{k}\mu_j (s_j^2+t_j^2), \;\;\; g(v)=\sum_{j=1}^{l}\gamma_j (s_j^2+t_j^2)\end{equation}
\end{theorem}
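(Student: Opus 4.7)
The $(\Leftarrow)$ direction is immediate: in such a basis the matrices of $\op{F}$ and $\op{G}$ decompose into $2\times 2$ rotation (possibly zero) blocks and therefore commute, so $\{f,g\}=0$ by Lemma \ref{lem5}. For $(\Rightarrow)$, Lemma \ref{lem5} reduces the problem to $[\op{F},\op{G}]=0$, and my plan is to induct on $\dim V$ by carving off $2$-dimensional symplectic planes that are simultaneously $\op{F}$- and $\op{G}$-invariant, in the spirit of the proof of Theorem \ref{simul}, using the hypothesis on $K\coloneqq Ker(\op{F})\cap Ker(\op{G})$ to handle the kernel directions.

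The first move is to split off $K$. Since $K$ is symplectic by hypothesis, $V=K\oplus K^{\omega}$ is a symplectic direct sum, and both $\op{F}$ and $\op{G}$ vanish on $K$ and preserve $K^\omega$ (by their $\omega$-antisymmetry, as in Lemma \ref{lem6}). Any symplectic basis of $K$ contributes to the final basis with $\mu_j=\gamma_j=0$, so it suffices to diagonalize the restrictions $\op{F}'\coloneqq\op{F}|_{K^\omega}$ and $\op{G}'\coloneqq\op{G}|_{K^\omega}$, which still commute and now satisfy $Ker(\op{F}')\cap Ker(\op{G}')=K\cap K^{\omega}=\{0\}$.

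The induction on $\dim K^\omega$ has a trivial base case. For the inductive step I would pick a common complex eigenvector $v$ of $\op{F}'$ and $\op{G}'$; such a $v$ exists because commuting operators on a nonzero complex space share an eigenvector (each $\op{F}'$-eigenspace is $\op{G}'$-invariant). Let $(i\alpha,i\beta)$ be its joint eigenvalues; Lemma \ref{lem7} applied to each form gives $\alpha,\beta\in\R$, and $Ker(\op{F}')\cap Ker(\op{G}')=\{0\}$ rules out $\alpha=\beta=0$. Writing $v=x+iy$, I apply the Lemma \ref{lem3}-style identity $A(\overline{v},v)=-2\alpha\,\omega(x,y)$ (and its $B$-analog for $\op{G}$) to whichever of the two forms is non-degenerate on $v$. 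Strict positivity of the resulting diagonal sum follows from Proposition \ref{prop1}, since $v\in K^\omega\setminus\{0\}$ cannot lie in both $Ker(\op{F}')$ and $Ker(\op{G}')$; hence there is at least one form under which $v$ is not null. This yields $\omega(x,y)<0$, and after normalization $W=span\{x,y\}$ is a $2$-dimensional $\op{F}'$- and $\op{G}'$-invariant symplectic plane on which the two operators act as rotations of speeds $\alpha$ and $\beta$ (one possibly zero). Its symplectic complement inside $K^\omega$ is again symplectic, $\op{F}$- and $\op{G}$-invariant, and still has trivial common kernel, so the inductive hypothesis applies and the argument closes.

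The main obstacle is ensuring that the Lemma \ref{lem3}-style computation goes through in the degenerate sub-cases $\alpha=0$ or $\beta=0$: there $v$ lies in the kernel of one of $\op{F},\op{G}$ and the corresponding diagonal sum vanishes, so the identity must be invoked for the other form, whose strict positivity on $v$ is exactly what the preliminary reduction to $K^\omega$ guarantees. Assembling the planes $W$ obtained at each step produces the desired symplectic basis of $V$, and the diagonal formulas for $f(v)$ and $g(v)$ follow block by block from the prescribed action of $\op{F}$ and $\op{G}$ on each pair $(x_j,y_j)$.
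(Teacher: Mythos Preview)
Your argument is correct and follows the same inductive strategy as the paper's proof: carve off a common $\op{F}$- and $\op{G}$-invariant symplectic plane via a shared eigenvector in $V^{\C}$, then recurse on the symplectic complement. The only organizational difference is that you split off the symplectic subspace $K=Ker(\op{F})\cap Ker(\op{G})$ once at the outset, whereas the paper leaves $K$ in place and handles the ``both eigenvalues zero'' case as a third sub-case inside the inductive step; your preliminary reduction to $K^{\omega}$ eliminates that case entirely and is arguably tidier.
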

\begin{proof}
Suppose there exists such a basis. The $\op{F}$ and $\op{G}$ commute, so $f$ and $g$ Poisson commute by Lemma \ref{lem5}. Now suppose that $f$ and $g$ Poisson commute, then $\op{F}$ and $\op{G}$ commute by Lemma \ref{lem5}. So, there exists at least one common eigenvector $v\in V^{\C}$ for $\op{F}$ and $\op{G}$. If the $\op{F}$-eigenvalue is $i\mu$ $(\mu>0)$ and the $\op{G}$-eigenvalue is $i\gamma$ $(\gamma>0)$. Writing $v=x+iy$ with $x, y \in V$, we see from Lemma \ref{lem3} that $\op{F}x=-\mu y$, $\op{F}y=\mu x$, $\op{G}x=-\gamma y$ and $\op{G}y=\gamma x$, and that $W=span\{x, y\}$ is a symplectic plane in $V$. In fact, after the appropriate normalization of $v$, the vectors $x, y$ are a symplectic basis of $W$.\\
If only one of the eigenvalues is zero, then let us assume without loss of generality that the $\op{F}$-eigenvalue is $0$ and the $\op{G}$-eigenvalue is $i\gamma$ $(\gamma>0)$. Writing $v=x+iy$ with $x,y \in V$, we see from Lemma \ref{lem3} applied to $\op{G}$ that $\op{F}x=0$, $\op{F}y=0$, $\op{G}x=-\gamma y$ and $\op{G}y=\gamma x$, and that $W=span\{x, y\}$ is a symplectic plane in $V$. In fact, after the appropriate normalization of $v$, the vectors $x, y$ are a symplectic basis of $W$.\\
If both eigenvalues are zero, then $v\in Rad(A)\cap Rad(B)$. Take $x\in \Re v$ and since the intersection of radicals is a symplectic subspace, we can find a real $y\in Rad(A)\cap Rad(B)$ such that $\omega(x,y)=1$. $W=span\{x,y\}$ forms a symplectic plane and the vectors $x,y$ are a symplectic basis of $W$.
The subspace $W$ is invariant under both $\op{F}$ and $\op{G}$, so its skew-orthogonal complement $W^\omega$ is also invariant under both $\op{F}$ and $\op{G}$.\\
And now we repeat this procedure on $W^\omega$. We can now induct over the dimension of $V$ similar to that in the proof of Williamson's theorem to find our desired basis.
\end{proof}

Unfortunately, we cannot generalize this further to the case where $\op{F}$ and $\op{G}$ are not diagonalizable but admit a Jordan decomposition. This is because it is not guaranteed that two commuting operators can be simultaneously Jordan decomposed.\\
The following example will elucidate this further.
\begin{example}
Let $n\geq3$ and let $\op{J_n}$
be the $n$-th Jordan block, the $n\times n$ matrix whose entries are all $0$
except just above the diagonal where the $n-1$
entries equal $1$.
We claim that although $\op{J_n}$ obviously commutes with its square $\op{J^2_n}$, these matrices cannot be simultaneously Jordan decomposed.\\
Indeed any matrix $\op{P}$
Jordan decomposing $\op{J_n}$
will satisfy
$$\op{P^{-1}}\op{J_n}\op{P}=\op{J_n}$$
because of the uniqueness of Jordan forms.\\
But this will force (by squaring that equality)
$$\op{P^{-1}}\op{J^2_n}\op{P}=\op{J^2_n}$$
which is not in Jordan form.
Hence no matrix $\op{P}$
can simultaneously Jordan decompose both $\op{J_n}$
and $\op{J^2_n}$.
\end{example}

Let us now state some corollaries \footnote{the following corollaries are thanks to very fruitful discussions with Dr. Hemant Kumar Mishra, Department of Electrical and Computer Engineering, Cornell University, USA.} that find further useful applications in other fields.
\begin{corollary}\label{3.4}
A symmetric positive-definite matrix $\op{M_A}\in\mathbb{M}(\R^{2n})$ is orthosymplectically diagonalizable in the sense of Williamson's theorem if and only if $\op{J}\op{M_A}=\op{M_A}\op{J}$.
\end{corollary}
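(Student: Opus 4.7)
The plan is to reduce the corollary to the classical identification $\operatorname{OrSp}(\R^{2n})=\operatorname{O}(2n)\cap\operatorname{Sp}(\R^{2n})\cong U(n)$, realized by viewing $(\R^{2n},\op{J_{2n}})$ as $\C^n$. The key underlying fact I would establish first is that a real invertible $2n\times 2n$ matrix $\op{S}$ satisfies both $\op{S}^T\op{S}=\op{I}$ and $\op{S}^T\op{J_{2n}}\op{S}=\op{J_{2n}}$ if and only if it commutes with $\op{J_{2n}}$; this is a two-line computation using the identity $\op{S}^{-1}=\op{S}^T$ in the symplectic condition, which gives $\op{S}^{-1}\op{J_{2n}}\op{S}=\op{J_{2n}}$, and running it in reverse. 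Once this characterization is in hand, both directions follow quickly.

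For the forward direction, suppose there is an orthosymplectic $\op{S}$ with $\op{S}^T\op{M_A}\op{S}=\op{D}$, where $\op{D}=\operatorname{diag}(\op{D_n},\op{D_n})$ is the Williamson normal form. Since $\op{D}$ is block-diagonal with identical $n\times n$ blocks, it commutes with $\op{J_{2n}}$ by direct check, and by the key fact so does $\op{S}$ (and hence $\op{S}^T=\op{S}^{-1}$). Substituting $\op{M_A}=\op{S}\op{D}\op{S}^T$ and pushing $\op{J_{2n}}$ through the three factors one at a time immediately yields $\op{J_{2n}}\op{M_A}=\op{M_A}\op{J_{2n}}$.

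For the converse, assume $\op{J_{2n}}\op{M_A}=\op{M_A}\op{J_{2n}}$. Under the identification $(\R^{2n},\op{J_{2n}})\cong\C^n$, a symmetric positive-definite real matrix that commutes with $\op{J_{2n}}$ corresponds to a Hermitian positive-definite complex-linear operator on $\C^n$ with respect to the standard Hermitian form $h(u,v)=\langle u,v\rangle-i\omega(u,v)$. The complex spectral theorem then produces a unitary $\op{U}\in U(n)$ diagonalizing it to $\operatorname{diag}(\mu_1,\dots,\mu_n)$ with $\mu_j>0$; unwinding the identification, $\op{U}$ becomes a real orthosymplectic $\op{S}\in\operatorname{OrSp}(\R^{2n})$, and the complex diagonal form unpacks into the real block matrix $\operatorname{diag}(\op{D_n},\op{D_n})$, which is exactly the Williamson normal form. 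The main obstacle is really just the bookkeeping around the identification $\operatorname{OrSp}(\R^{2n})\cong U(n)$ and verifying that a complex diagonal operator on $\C^n$ unpacks correctly to the desired block form on $\R^{2n}$; once that dictionary is fixed, the argument is essentially the complex spectral theorem, and Williamson's theorem does not even need to be invoked directly.
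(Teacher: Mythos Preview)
Your argument is correct, modulo a small slip: your ``key fact'' is false as a biconditional, since commuting with $\op{J_{2n}}$ does not by itself imply orthogonality (take $\op{S}=2\op{I}$); what you actually use, and what is true, is that an \emph{orthogonal} matrix is symplectic iff it commutes with $\op{J_{2n}}$, and this suffices for both directions of your proof. Your route, however, is genuinely different from the paper's. The paper derives the corollary as a direct application of Theorem~\ref{simul} by taking the second quadratic form to be that of the identity matrix $\op{I}$: the hypothesis $\op{J}\op{M_A}=\op{M_A}\op{J}$ is exactly $[\op{J}\op{M_A},\op{J}\op{I}]=0$, and since all symplectic eigenvalues of $\op{I}$ equal $1$, any symplectic $\op{S}$ diagonalizing $\op{I}$ in Williamson's sense satisfies $\op{S}^T\op{S}=\op{I}$ and is hence orthosymplectic; both implications then fall out of the ``iff'' in Theorem~\ref{simul}. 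Your approach via $\operatorname{OrSp}(\R^{2n})\cong U(n)$ and the complex spectral theorem is more elementary and self-contained---it never invokes Williamson's theorem or Theorem~\ref{simul}, and it makes transparent that orthosymplectic Williamson diagonalization is nothing other than unitary diagonalization of a Hermitian positive-definite operator on $\C^n$. The paper's argument, by contrast, is deliberately a two-line consequence of its main simultaneous-diagonalization result, the point being to showcase Theorem~\ref{simul} as the engine.
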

\begin{proof}
The condition $\op{J}\op{M_A}=\op{M_A}\op{J}$ means $\op{J}\op{M_A}$ and $\op{J}\op{I}$ commute with each other, where $\op{I}\in\mathbb{M}(\R^{2n})$ is the identity matrix. Since the symplectic eigenvalues of $\op{I}$ are all $1$, it follows that any symplectic matrix diagonalizing $\op{I}$ in the sense of Williamson's theorem is an orthogonal matrix. The statement of the corollary follows from Theorem \ref{simul}. 
\end{proof}

\begin{corollary}\label{simulcor}
Let $\mathcal{F}$ be the set of Poisson commuting positive-definite quadratic forms on $V$ i.e. if $f,g\in\mathcal{F}$ then $\{f,g\}=0$, then there exists a symplectic basis of $V$ which diagonalizes each element of $\mathcal{F}$ in the sense of Williamson's theorem.
\end{corollary}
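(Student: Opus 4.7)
The plan is to induct on $n = \dim V / 2$, bootstrapping Theorem~\ref{simul} from a pair of forms to an arbitrary family. The inductive step splits into two cases depending on whether some single element of $\mathcal{F}$ already produces a non-trivial decomposition of $V$.

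Case A: there exists $f_0 \in \mathcal{F}$ whose $\op{F}_0$ has at least two distinct symplectic eigenvalues. Then Williamson's theorem yields the symplectic-eigenspace decomposition $V^{\C} = B_{\lambda_1} \oplus \cdots \oplus B_{\lambda_k}$ with $k \geq 2$; let $W_j \subset V$ denote the real form of $B_{\lambda_j}$, a symplectic subspace of dimension strictly less than $2n$. Since every $g \in \mathcal{F}$ has $\op{G}$ commuting with $\op{F}_0$, $\op{G}$ preserves each $W_j$, and the restriction of the operator associated to $g$ on $V$ to $W_j$ coincides with the operator associated to $g|_{W_j}$ on the symplectic space $(W_j, \omega|_{W_j})$. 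Thus $\{g|_{W_j} : g \in \mathcal{F}\}$ is itself a Poisson-commuting family of positive-definite quadratic forms on $W_j$, and the inductive hypothesis supplies a joint symplectic basis of each $W_j$; concatenating across $j = 1, \dots, k$ produces the desired basis of $V$.

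Case B: every $f \in \mathcal{F}$ has $\op{F}$ with a single distinct symplectic eigenvalue $\mu(f)$, equivalently $\op{F}^2 = -\mu(f)^2 \op{I}$. Then $\op{K}_f := \op{F}/\mu(f)$ is an $\omega$-compatible complex structure: $\op{K}_f^2 = -\op{I}$, $\op{K}_f \in \op{Sp}(V)$ (using antisymmetry of $\op{F}$), and $\omega(v, \op{K}_f w)$ is a positive-definite symmetric inner product. The crux is a rigidity statement: any two commuting $\omega$-compatible complex structures coincide. For $f, g \in \mathcal{F}$, set $\op{L} := \op{K}_f \op{K}_g$; commutativity together with $\op{K}_f^2 = \op{K}_g^2 = -\op{I}$ gives $\op{L}^2 = \op{I}$, so $V$ splits into the $\pm 1$-eigenspaces $V_+ \oplus V_-$ of $\op{L}$. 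On $V_+$ one has $\op{K}_g = \op{K}_f^{-1} = -\op{K}_f$, which forces $\omega(v, \op{K}_g v) = -\omega(v, \op{K}_f v) < 0$ for every nonzero $v \in V_+$, contradicting positive-definiteness of $\omega(\cdot, \op{K}_g \cdot)$. Hence $V_+ = \{0\}$, $\op{L} = -\op{I}$, and $\op{K}_f = \op{K}_g$. All $\op{K}_f$ therefore equal a single $\op{K}$, and any symplectic basis $\{x_i, y_i\}$ with $\op{K} x_i = -y_i$, $\op{K} y_i = x_i$ (obtained by applying Theorem~\ref{will} to any fixed $f_0 \in \mathcal{F}$) simultaneously puts every $\op{F} = \mu(f) \op{K}$ into Williamson normal form.

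The induction is well-founded because Case A strictly drops dimension while Case B terminates directly, and in dimension $2$ only Case B can occur. I expect the delicate step to be Case B: the positivity-driven rigidity lemma for commuting compatible complex structures is the one ingredient beyond Williamson's theorem and routine induction, and it is precisely where positive-definiteness of every form in $\mathcal{F}$ is essentially used.
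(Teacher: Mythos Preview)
The paper does not supply a proof of Corollary~\ref{simulcor}; it is stated as a direct consequence of Theorem~\ref{simul}. The intended argument is presumably the verbatim extension of that proof from two forms to a family: the operators $\{\op{F}_f : f \in \mathcal{F}\}$ pairwise commute by Lemma~\ref{lem5} and are each diagonalizable over $\C$, hence possess a common eigenvector $v \in V^{\C}$; writing $v = x + iy$, the calculation of Lemma~\ref{lem3} gives $A_f(x,x)+A_f(y,y) = -2\mu_f\,\omega(x,y)$ for every $f$, so all the $\mu_f$ share the sign opposite to $\omega(x,y)$ and (after replacing $v$ by $\bar v$ if necessary) are positive. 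One then normalizes, splits off the symplectic plane $W = span\{x,y\}$, and inducts on $W^\omega$ exactly as in Theorem~\ref{simul}.

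Your proof is correct but organized differently. Case~A is the standard ``pass to isotypic components of one operator and recurse'' manoeuvre. The real content is Case~B, and your rigidity observation---that two commuting $\omega$-compatible complex structures must coincide---is a clean, self-contained fact worth isolating. The trade-off: the common-eigenvector route handles everything in one uniform inductive step with no case split, while your decomposition makes the joint eigenspace structure explicit and pinpoints exactly where positive-definiteness of \emph{every} form in $\mathcal{F}$ is used (the $V_+ = \{0\}$ step). Both arguments are sound; the one implicit in the paper is shorter.
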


\begin{theorem}
Let $\op{A},\op{B}\in\mathbb{M}(\R^{2n})$ be symmetric positive-definite matrices. If the matrices satisfy $[\op{A},\op{B}]=0$ as well as $[\op{JA},\op{JB}]=0$, then for all $s\in\R$, the matrices $\op{A^s}$ and $\op{B^s}$ satisfy $[\op{JA^s},\op{JB^s}]=0$.
\end{theorem}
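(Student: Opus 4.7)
The plan is to reduce the assertion to a purely entry-wise combinatorial statement by exploiting the hypothesis $[\op{A},\op{B}]=0$ to simultaneously \emph{orthogonally} diagonalize $\op{A}$ and $\op{B}$. Because $\op{A}$ and $\op{B}$ are symmetric positive-definite commuting matrices, the usual spectral theorem yields an orthogonal $\op{U}\in\mathbb{M}(\R^{2n})$ with $\op{U}^T\op{A}\op{U}=\op{\Lambda_A}=\mathrm{diag}(a_1,\dots,a_{2n})$ and $\op{U}^T\op{B}\op{U}=\op{\Lambda_B}=\mathrm{diag}(b_1,\dots,b_{2n})$, all $a_i,b_i>0$. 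Real powers are then defined through the functional calculus as $\op{A^s}=\op{U}\op{\Lambda_A^s}\op{U^T}$ and $\op{B^s}=\op{U}\op{\Lambda_B^s}\op{U^T}$.

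Next I would reformulate the commutator hypothesis. A short computation gives $[\op{JA},\op{JB}]=\op{J}(\op{A}\op{J}\op{B}-\op{B}\op{J}\op{A})$, so the hypothesis $[\op{JA},\op{JB}]=0$ is equivalent to $\op{A}\op{J}\op{B}=\op{B}\op{J}\op{A}$. Introducing the conjugated matrix $\op{K}=\op{U^T}\op{J}\op{U}$ and conjugating by $\op{U}$, this becomes the diagonal-sandwich identity $\op{\Lambda_A}\op{K}\op{\Lambda_B}=\op{\Lambda_B}\op{K}\op{\Lambda_A}$. Reading entry by entry, this says that for every pair $(i,j)$ one has $a_ib_j\,\op{K}_{ij}=b_ia_j\,\op{K}_{ij}$, so either $\op{K}_{ij}=0$ or $a_i/a_j=b_i/b_j$.

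This dichotomy is the engine of the argument. Whenever $\op{K}_{ij}\neq 0$, the multiplicative relation $a_ib_j=a_jb_i$ immediately yields $a_i^s b_j^s=a_j^s b_i^s$ for every real $s$, and when $\op{K}_{ij}=0$ there is nothing to verify. Hence the entry-wise identity $a_i^sb_j^s\,\op{K}_{ij}=a_j^sb_i^s\,\op{K}_{ij}$ holds for all $(i,j)$, i.e.\ $\op{\Lambda_A^s}\op{K}\op{\Lambda_B^s}=\op{\Lambda_B^s}\op{K}\op{\Lambda_A^s}$. Conjugating back by $\op{U}$ gives $\op{A^s}\op{J}\op{B^s}=\op{B^s}\op{J}\op{A^s}$, which is exactly $[\op{JA^s},\op{JB^s}]=0$.

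I expect the main pitfall to be conceptual rather than computational: one is tempted to use Theorem \ref{simul} to simultaneously Williamson-diagonalize $\op{A}$ and $\op{B}$ via a symplectic basis, but Williamson diagonalization is not compatible with the functional calculus used to define $\op{A^s}$ and $\op{B^s}$ because the diagonalizing transformation is generally not orthogonal. The hypothesis $[\op{A},\op{B}]=0$ is precisely what is needed to replace Williamson diagonalization with ordinary simultaneous orthogonal diagonalization, after which the key multiplicative relation $a_ib_j=a_jb_i$ is manifestly preserved by real exponentiation. The only subtlety to watch is that positive-definiteness is used both to make the functional calculus unambiguous for arbitrary $s\in\R$ and to guarantee $a_j,b_j>0$, so that the identities $a_i^s b_j^s=a_j^s b_i^s$ make sense.
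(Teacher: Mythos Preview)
Your proof is correct and takes a genuinely different route from the paper's. The paper does not simultaneously diagonalize $\op{A}$ and $\op{B}$ at all; instead it combines the two commutator hypotheses to show that the single matrix $\op{M}=\op{A}\op{B^{-1}}$ commutes with $\op{J}$, and then invokes Corollary~\ref{3.4} to orthosymplectically diagonalize $\op{M}$. Because an orthosymplectic matrix is in particular orthogonal, the functional calculus applies and the same matrix diagonalizes $\op{M}^s=\op{A^s}\op{B^{-s}}$, so $\op{M}^s$ again commutes with $\op{J}$; unwinding with $[\op{A},\op{B}]=0$ yields the conclusion. Your argument is more elementary and self-contained: it bypasses Corollary~\ref{3.4} and all the symplectic machinery, reducing the claim to the scalar observation that the relation $a_ib_j=a_jb_i$ on joint eigenvalues is preserved under real exponentiation. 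What the paper's route buys is brevity and a clean conceptual punchline (the crux is that $\op{A}\op{B^{-1}}$ commutes with $\op{J}$, and this is inherited by powers); what yours buys is independence from the paper's earlier results and a transparent entry-wise mechanism. Your caution about Williamson diagonalization being incompatible with the functional calculus is apt for a general symplectic $\op{S}$, but note that the paper sidesteps exactly this issue by passing to the \emph{ortho}symplectic case via Corollary~\ref{3.4}, where $\op{S}^T=\op{S}^{-1}$ and the calculus does go through.
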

\begin{proof}
Assume that $\op{A},\op{B}\in\mathbb{M}(\R^{2n})$ are symmetric positive-definite matrices satisfying $[\op{A},\op{B}]=0$ as well as $[\op{JA},\op{JB}]=0$. Then this implies:
\begin{equation}\label{eq3.6}\op{B^{-1}}\op{A}\op{J}=\op{J}\op{A}\op{B^{-1}}\end{equation}
Since $\op{A}$ and $\op{B}$ also commute, the matrix $\op{A}\op{B^{-1}}$ is a symmetric positive-definite matrix. Therefore, combining equation (\ref{eq3.6}) and Corollary \ref{3.4}, we get that $\op{A}\op{B^{-1}}$ is orthosymplectically diagonalized in the sense of Williamson's theorem. This implies that for any $s$, the matrix $\op{A^s}\op{B^{-s}}$ is also orthosymplectically diagonalized in the sense of Williamson's theorem. Invoking Corollary \ref{3.4} again, we thus have $\op{J}\op{A^s}\op{B^{-s}}=\op{A^s}\op{B^{-s}}\op{J}$. Using the commutativity of $\op{A}$ and $\op{B}$, this becomes $\op{A^s}\op{J}\op{B^s}=\op{B^s}\op{J}\op{A^s}$.
\end{proof}

Let us define $\op{A}\sharp_t\op{B}\coloneqq \op{A}^{1/2}(\op{A^{-1/2}}\op{B}\op{A^{-1/2}})^t\op{A}^{1/2}$ for all $t\in [0,1]$. Note that $\op{A}\sharp\op{B}\coloneqq\op{A}\sharp_{1/2}\op{B}$ is the geometric mean of $\op{A}$ and $\op{B}$.

\begin{corollary}
Let $\op{A},\op{B}\in\mathbb{M}(\R^{2n})$ be symmetric positive-definite matrices. \\Let $\mathcal{G}\coloneqq\{\op{A}\sharp_t\op{B}\;|\; t\in [0,1]\}$. If $\op{A}$ and $\op{B}$ are simultaneously diagonalizable by a symplectic matrix in the sense of Williamson's theorem, then the family $\mathcal{G}$ is also simultaneously diagonalizable by a symplectic matrix in the sense of Williamson's theorem.
\end{corollary}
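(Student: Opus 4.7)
The plan is to exploit the congruence covariance of the weighted geometric mean together with the observation that Williamson normal forms commute and are closed under the operation $\sharp_{t}$.

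First, by hypothesis there is a symplectic $\op{S}\in\op{Sp}(\R^{2n})$ with
$$\op{S}^T\op{A}\op{S}=\op{D}_A,\qquad \op{S}^T\op{B}\op{S}=\op{D}_B,$$
where both right-hand sides are in the Williamson block form $\begin{bmatrix}D & 0\\ 0 & D\end{bmatrix}$, with $D=\operatorname{diag}(\mu_1,\dots,\mu_n)$ for $\op{D}_A$ and $D=\operatorname{diag}(\gamma_1,\dots,\gamma_n)$ for $\op{D}_B$. In particular, both $\op{D}_A$ and $\op{D}_B$ are positive-definite \emph{diagonal} $2n\times 2n$ matrices, so they commute with one another.

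Second, I would establish the congruence covariance of $\sharp_{t}$: for every invertible $\op{C}$, every pair of positive-definite $\op{A},\op{B}$, and every $t\in[0,1]$,
$$\op{C}^T(\op{A}\sharp_t\op{B})\op{C}=(\op{C}^T\op{A}\op{C})\sharp_t(\op{C}^T\op{B}\op{C}).$$
The cleanest argument uses the polar decomposition $\op{A}^{1/2}\op{C}=\op{U}\op{P}$, where $\op{U}$ is orthogonal and $\op{P}=(\op{C}^T\op{A}\op{C})^{1/2}$. The identities $\op{A}^{1/2}\op{C}=\op{U}\op{P}$ and $\op{C}^T\op{A}^{1/2}=\op{P}\op{U}^T$ let both sides of the asserted identity be rewritten as $\op{P}\op{U}^T(\op{A}^{-1/2}\op{B}\op{A}^{-1/2})^t\op{U}\op{P}$, using that orthogonal conjugation commutes with the $t$-th power.

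Third, I specialize $\op{C}=\op{S}$ to obtain $\op{S}^T(\op{A}\sharp_t\op{B})\op{S}=\op{D}_A\sharp_t\op{D}_B$. Because $\op{D}_A$ and $\op{D}_B$ commute, $\sharp_{t}$ collapses to the elementary formula $\op{D}_A\sharp_t\op{D}_B=\op{D}_A^{1-t}\op{D}_B^t$, which is the diagonal matrix
$$\op{D}_A^{1-t}\op{D}_B^t=\begin{bmatrix}D_A^{1-t}D_B^t & 0\\ 0 & D_A^{1-t}D_B^t\end{bmatrix}.$$
This is again in Williamson normal form, so the single symplectic matrix $\op{S}$ simultaneously Williamson-diagonalizes every element of $\mathcal{G}$, producing the symplectic eigenvalues $\mu_j^{1-t}\gamma_j^t$ for $j=1,\dots,n$.

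The only non-routine step is the congruence covariance identity: without it the argument stalls, because $\op{A}^{1/2}$ transforms awkwardly under a non-orthogonal congruence by $\op{S}$. Once covariance is in hand, the remainder of the proof is the trivial closure property that positive-definite diagonal matrices of Williamson shape are preserved under the product $\op{D}_A^{1-t}\op{D}_B^t$ for every $t\in[0,1]$.
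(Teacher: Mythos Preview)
Your proof is correct and follows the same route as the paper: both hinge on the congruence covariance identity $\op{M}^T(\op{A}\sharp_t\op{B})\op{M}=(\op{M}^T\op{A}\op{M})\sharp_t(\op{M}^T\op{B}\op{M})$ for invertible $\op{M}$. The paper simply cites this identity (Lim--P\'alfia, Lemma~2.1), whereas you supply a clean polar-decomposition proof of it and spell out the final step that $\op{D}_A\sharp_t\op{D}_B=\op{D}_A^{1-t}\op{D}_B^{t}$ remains in Williamson form.
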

\begin{proof}
Follows from the property $\op{M}^T(\op{A}\sharp_t\op{B})\op{M}=(\op{M}^T\op{A}\op{M})\sharp_t(\op{M}^T\op{B}\op{M})$ for any non-singular matrix $\op{M}$ (see \cite{lim}, Lemma 2.1, p. 1498-1514).
\end{proof}

\section{Applications}
\subsection{Decoupling physical systems}
Theorems \ref{simul} and \ref{simul2} can be used in physics to decouple a system comprising several Poisson commuting quadratic phase space observables (usually, Hamiltonians) such that the equations of motion remain invariant.\\ Let us give an example of this is in statistical thermodynamics.

Suppose we have a system of $N$ non-interacting particles in $\R^d$ whose quadratic symmetric positive-definite Hamiltonians form a Poisson commuting family. Then the configuration space is $Q=\R^{Nd}$ and the phase space is naturally given by the cotangent bundle \\$T^*Q = \R^{2Nd}$.\\ The canonical partition function for such a system is given by:
\begin{equation}\mathcal{Z}=\int_{\R^{2Nd}}\frac{1}{(2\pi\hbar)^{Nd}}\exp{-\beta\sum_{i=1}^N H_i(z_i,z_i)}\;d^d z_1\dots d^d z_N\end{equation}
where $\beta=\frac{1}{K_B T}$ and $K_B$ \& $T$ are the \textit{Boltzmann constant} and temperature respectively; \\$\hbar$ is the \textit{reduced Planck constant};\\ $H_i$ and $z_i=(q_i,p_i)\in\R^{2Nd}$ are the Hamiltonian and the phase space coordinates for the $i$-th particle respectively;\\
and $d^d z_i=d^d q_i\; d^d p_i$ is the $i$-th component of the phase space volume element.

By our assumption, $\{H_i, H_j\}=0$ for all $i,j$. Then from Corollary \ref{simulcor}, we can find symplectic coordinates for $\R^{2Nd}$ such that the $H_i$ are simultaneously diagonalized for all $i$ in the sense of Williamson's theorem.\\
Let us call these coordinates $\{x_1,\dots,x_{Nd},y_1,\dots,y_{Nd}\}$. Then if $\lambda_{i,k}$ denotes the symplectic eigenvalue of $H_i$ corresponding to the $k$-th symplectic eigenvector pair i.e. $\{x_k,y_k\}$ and $l=(i-1)d+j$, we have:
\begin{align*}
\mathcal{Z}&=\frac{1}{(2\pi\hbar)^{Nd}}\prod_{i=1}^N\prod_{j=1}^{d}\int_{\R^2}\exp{-\beta\lambda_{i,l}(x_{l}^2+y_{l}^2)}\;dx_{l}\;dy_{l}\\
&=\frac{1}{(2\pi\hbar)^{Nd}}\prod_{i=1}^N\prod_{j=1}^{d}\left(\frac{\pi}{\beta}\right)\frac{1}{\lambda_{i,l}}\\
&=\frac{1}{(2\hbar\beta)^{Nd}}\prod_{i=1}^N\frac{1}{\sqrt{det(H_i)}}
\end{align*}
Going from the first line to the second, we have use the value of the Gaussian integral (see Appendix \ref{6.3}) and going from the second line to the third, we have used the fact that determinant of a symplectic matrix is unity.

Now, we consider a system of $N$ interacting particles in $\R^d$ whose quadratic symmetric positive-definite Hamiltonians form a Poisson commuting family. The phase space is again $\R^{2Nd}$. Using the same notation as before, the canonical partition function is given by:
\begin{equation}\mathcal{Z}_{int}=\int_{\R^{2Nd}}\frac{1}{(2\pi\hbar)^{Nd}}\exp{-\beta\sum_{i=1}^N H_i(z,z)}\;d^d z_1\dots d^d z_N\end{equation}
where $z=(z_1,z_2,\dots,z_N)\in\R^{2Nd}$

By our assumption, $\{H_i, H_j\}=0$ for all $i,j$. Then from Corollary \ref{simulcor}, we can find symplectic coordinates for $\R^{2Nd}$ such that the $H_i$ are simultaneously diagonalized for all $i$ in the sense of Williamson's theorem.\\
Let us call these coordinates $\{x_1,\dots,x_{Nd},y_1,\dots,y_{Nd}\}$. Then if $\lambda_{i,k}$ denotes the symplectic eigenvalue of $H_i$ corresponding to the $k$-th symplectic eigenvector pair, we have:
\begin{align*}
\mathcal{Z}_{int}&=\frac{1}{(2\pi\hbar)^{Nd}}\prod_{j=1}^{Nd}\int_{\R^2}\exp{-\beta\sum_{i=1}^N\lambda_{i,j}(x_{j}^2+y_{j}^2)}\;dx_{j}\;dy_{j}\\
&=\frac{1}{(2\hbar\beta)^{Nd}}\prod_{j=1}^{Nd}\frac{1}{\sum_{i=1}^N\lambda_{i,j}}\\
\end{align*}

\subsection{Symplectic diagonalization and phase space constraints}
We will now try to establish a geometric connection between the degenerate (symmetric positive-semi definite) and
non-degenerate cases (symmetric positive-definite) in context of symplectic diagonalization from the perspective of phase space constraints.

Let $\operatorname{M}$ be a real symmetric positive-semi definite automorphism on
$\mathbb{R}^{2n}$ whose kernel is a symplectic subspace such that its non-zero symplectic eigenvalues are $\{\lambda_{j}\}$ for $1\leq j\leq k$

Consider the Hamiltonian $H(z')=\langle z',\operatorname{M}z'\rangle$ for
$z'\in\mathbb{R}^{2n}$.

Let $\operatorname{\tilde{M}}$ be a real symmetric positive-definite automorphism
on $\mathbb{R}^{2n}$ such that its symplectic eigenvalues are $\{\tilde{\lambda}_{j}\}$ for
$1\leq j\leq n$ and 
\begin{align*}
\tilde{\lambda}_{i} & =\lambda_{i};\;\;1\leq i\leq k\\
\end{align*}

Consider the Hamiltonian $\tilde{H}(z')=\langle z',\operatorname{\tilde{M}%
}z'\rangle$ for $z'\in\mathbb{R}^{2n}$.\newline Suppose $[\op{J}\op{M},\op{J}\op{\tilde{M}}]=0$. \\Then we know from Theorem \ref{simul2} that there exists a symplectic matrix $\op{S}$ that will diagonalize $\op{M}$ and $\op{\tilde{M}}$ simultaneously in the sense of Williamson's theorem.  More explicitly,
\[
\tilde{H}(\op{S}z)=\langle\operatorname{S}z,\operatorname{\tilde{M}}\operatorname{S}z\rangle=\sum_{i=1}^{n}\tilde{\lambda}_{i}(x_{i}^{2}+p_{i}^{2})
\]
\[
H(\op{S}z)=\langle\operatorname{S}z,\operatorname{M}\operatorname{S}z\rangle=\sum_{i=1}%
^{k}\lambda_{i}(x_{i}^{2}+p_{i}^{2})
\]
where $z=\op{S}^{-1}z'$.

Comparing both the decompositions and using the assumption that
$\tilde{\lambda}_{i}=\lambda_{i}$ for $1\leq i\leq k$, we can define the following constraints:
\begin{align*}
x_{j}  &  =0;\;\;(k+1)\leq j\leq n\\
p_{i}  &  =0;\;\;(k+1)\leq i\leq n
\end{align*}
We can collectively denote these by $\chi_{j}=0$ for $1\leq j\leq
2(n-k)$.\newline We shall call $\chi_{j}$ the \textit{H\"{o}rmander
constraints}.\newline

Let $\Gamma$ be the H\"{o}rmander constraint surface generated from $\chi_{j}=0$, then
\begin{align*}
z^{T}\operatorname{S}^{T}\operatorname{\tilde{M}}\operatorname{S}%
z\vert_{\Gamma}  &  =z^{T}\operatorname{S}^{T}\operatorname{M}\operatorname{S}%
z\\
\Rightarrow\tilde{H}(\op{S}z)\vert_{\Gamma}  &  =H(\op{S}z)
\end{align*}
This can be expressed as,
\[
\tilde{H}(\op{S}z)=H(\op{S}z)+\sum_{j=1}^{2(n-k)}\chi_{j}C_{jj}\chi_{j}%
\]
for $C_{jj}\in\mathbb{R}$\newline\newline Since the $\operatorname{S}%
^{-1}z'=z=(x_{1},\dots,x_{n},p_{1},\dots,p_{n})\newline=(x_{1}%
,\dots,x_{(k)},\chi_{1},\dots,\chi_{(n-k)},p_{1},\dots,p_{k}%
,\chi_{(n-k+1)},\dots,\chi_{(2n-2k)})$ is a symplectic basis,\\ the set of
H\"{o}rmander constraints $\{\chi_{j}\}$ is a set of \textit{second-class constraints},
using the Dirac-Bergmann terminology (See \cite{ehlers}).


We can now summarize the above arguments into the following

\begin{proposition}\label{constr}
Let $\operatorname{M}\geq 0$ and $\operatorname{\tilde{M}}>0$ be two symmetric matrices where the former has a symplectic kernel
satisfying $[\op{J}\op{M},\op{J}\op{\tilde{M}}]=0$
that decompose, for an $\operatorname{S}%
\in\operatorname{Sp}(n)$, into
\begin{align*}
H(\operatorname{S}z)  &  =\langle\operatorname{S}z,\operatorname{M}\operatorname{S}z\rangle=\sum
_{i=1}^{k}\lambda_{i}(x_{i}^{2}+p_{i}^{2})
\end{align*}
for $1\leq k\leq n$ and
\begin{align*}
\tilde{H}(\operatorname{S}z)  & =\langle\operatorname{S}z,\operatorname{\tilde{M}}%
\operatorname{S}z\rangle=\sum_{i=1}^{n}\tilde{\lambda}_{i}(x_{i}^{2}+p_{i}^{2})
\end{align*}
with $\lambda_{i}=\tilde{\lambda}_{i}$ for $1\leq i\leq k$ \newline\newline Then, the
Hamiltonian $H$ can be extended off the H\"{o}rmander constraint surface
$\Gamma$, in phase space, such that $\tilde{H}(\op{S}z)=H(\op{S}z)+\sum_{j=1}^{2(n-k)}\chi_{j}C_{jj}\chi_{j}$ where $\chi_{j}$ are the
H\"{o}rmander constraints and $C_{jj}\in\mathbb{R}$.
\end{proposition}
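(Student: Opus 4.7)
The plan is to invoke Theorem \ref{simul2} on the pair $(H,\tilde{H})$ to obtain a common symplectic diagonalization, and then to observe that the difference $\tilde{H}(\op{S}z)-H(\op{S}z)$ is supported precisely on the coordinates that vanish on $\Gamma$, so that it organizes naturally as a quadratic form in the H\"{o}rmander constraints.

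First I would verify the hypotheses of Theorem \ref{simul2}. By the Remark following Lemma \ref{lem5}, the condition $[\op{J}\op{M},\op{J}\op{\tilde{M}}]=0$ is equivalent to $\{H,\tilde{H}\}=0$. Since $\op{\tilde{M}}>0$, the radical of $\tilde{H}$ is trivial, so $\operatorname{Rad}(H)\cap\operatorname{Rad}(\tilde{H})=\{0\}$, which is vacuously a symplectic subspace. Hence Theorem \ref{simul2} produces a symplectic $\op{S}\in\operatorname{Sp}(n)$ simultaneously diagonalizing both forms in the required sense, with coordinates $z=(x_1,\dots,x_n,p_1,\dots,p_n)$.

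Next I would subtract the two decompositions. Using $\lambda_i=\tilde{\lambda}_i$ for $1\leq i\leq k$ to cancel the first $k$ blocks leaves
\[
\tilde{H}(\op{S}z)-H(\op{S}z)=\sum_{i=k+1}^{n}\tilde{\lambda}_i(x_i^2+p_i^2).
\]
Labelling the H\"{o}rmander constraints by $\chi_j=x_{k+j}$ for $1\leq j\leq n-k$ and $\chi_j=p_{j-(n-k)+k}$ for $n-k+1\leq j\leq 2(n-k)$, each summand on the right has the form $\tilde{\lambda}_{(\cdot)}\chi_j^2$. Setting $C_{jj}\in\mathbb{R}$ equal to the corresponding $\tilde{\lambda}_i$ then yields the desired expansion; restriction to $\Gamma=\{\chi_j=0\;\forall j\}$ recovers $\tilde{H}(\op{S}z)\vert_\Gamma=H(\op{S}z)$ automatically.

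The main subtlety I expect is the alignment of two distinct symplectic conditions rather than any computation. The hypothesis that $\operatorname{Ker}(\op{M})$ is itself symplectic is what ensures, via Corollary \ref{semi}, that $H(\op{S}z)$ takes the clean diagonal form with no residual $s_j^2$ terms of the kind appearing in the general Theorem \ref{horm1}; the hypothesis needed by Theorem \ref{simul2} is only that the \emph{intersection} of radicals be symplectic, which is automatic here from $\op{\tilde{M}}>0$. The point requiring care is therefore that the simultaneous symplectic basis produced by Theorem \ref{simul2} really respects both normal forms at once, but this is built into the inductive step in that theorem's proof, which already splits off common $\op{F}$-- and $\op{G}$--eigenplanes (including those lying in the common kernel). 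Once that is granted, the proposition is essentially a bookkeeping of the simultaneous diagonalization into coordinates adapted to the constraint surface $\Gamma$.
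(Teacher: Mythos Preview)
Your proposal is correct and follows essentially the same route as the paper: invoke Theorem \ref{simul2} (via the Poisson-commutation remark) to justify the simultaneous diagonal form, subtract the two expressions using $\lambda_i=\tilde\lambda_i$ for $1\le i\le k$, and recognize the remainder $\sum_{i=k+1}^{n}\tilde\lambda_i(x_i^2+p_i^2)$ as a diagonal quadratic in the H\"ormander constraints $\chi_j$. Your explicit identification $C_{jj}=\tilde\lambda_i$ and your check that $\operatorname{Rad}(H)\cap\operatorname{Rad}(\tilde H)=\{0\}$ is (trivially) symplectic are slight sharpenings of what the paper leaves implicit; note also that, as the proposition is phrased, the two diagonal decompositions are already part of the hypotheses, so the appeal to Theorem \ref{simul2} is strictly speaking motivation rather than a needed step.
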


Before we try to apply Proposition \ref{constr} to phase space cylinders and ellipsoids, it would be in our favour to state the definition of \textit{symplectic capacity}, listing its properties which would naturally prove useful to us.

\begin{definition}
[Symplectic Capacity \cite{Birk,HZ,PR}]A symplectic capacity on $(\mathbb{R}^{2n},\omega)$
is a mapping, which to every subset $\Omega$ of $\mathbb{R}^{2n}$,
associates a number $c_{lin}(\Omega)\geq0$, or $\infty$ having the following
properties:

\begin{itemize}
\item $c(\Omega)\leq c(\Omega^{\prime})$ if $\Omega\subset\Omega^{\prime}$

\item $c(f(\Omega))=c(\Omega)$ for every symplectomorphism $f$

\item $c(\lambda\Omega)=\lambda^{2}c(\Omega)$ for every $\lambda\in\mathbb{R}$

\item $c(B(R))=c(Z_{j}(R))=\pi R^{2}$ where $B(R):\sum_{i=1}^{n}(x_{i}%
^{2}+p_{i}^{2})\leq R$ and $Z_{j}(R):x_{j}^{2}+p_{j}^{2}\leq R$ are the phase
space ball and cylinder of radius $R$ respectively.
\end{itemize}
\end{definition}

Let us try to define phase space cylinders and ellipsoids in the context of real symmetric positive-definite and semi definite matrices.

\begin{definition}\label{cyl}
    A phase space cylinder of
radius $R$, $Z_{j}(R)$, can be described as being a diagonal real symmetric
matrix $\operatorname{M}_j\geq 0$ whose kernel is a symplectic space with dimension
$2n-2$ such that the only non-zero entries are at $j,n+j$, both equal to
$R^{-2}$ such that
\[
Z_{j}(R):\langle z,\operatorname{M}_jz\rangle\leq1\Rightarrow x_{j}^{2}+p_{j}^{2}\leq R^{2}%
\]
\end{definition} \par
\begin{definition}\label{ell}
    A phase space ball of radius $R$, $B(R)$, can be described as being a
diagonal real symmetric matrix $\operatorname{\tilde{M}}>0$, all of whose
entries are equal to $R^{-2}$ such that
\[
B(R):\langle z,\operatorname{\tilde{M}}z\rangle\leq1\Rightarrow\sum_{i=1}^{n}(x_{i}%
^{2}+p_{i}^{2})\leq R^{2}%
\]
\end{definition}

Since $\operatorname{M}_j,\operatorname{\tilde{M}}$ satisfy $[\op{J}\op{M}_j,\op{J}\op{\tilde{M}}]=0$, they are
simultaneously diagonalizable in the sense of Williamson and from Proposition \ref{constr}, it follows that
\[
\langle z,\operatorname{\tilde{M}}z\rangle=\langle z,\operatorname{M}_jz\rangle+\sum_{l=1}^{2(n-k)}\chi_{l}C_{ll}\chi_{l}%
\]\\
In other words, $B(R)$ is the extension of $Z_{j}(R)$ off the appropriate
H\"{o}rmander constraint surface.\newline

Now from $\langle z,\operatorname{\tilde{M}}z\rangle\leq1$, we get $B(R)$ (from definition \ref{ell}) and from (the indices imply summation)

$\langle z,\operatorname{M}_j z\rangle +\chi_{l}C_{ll}\chi_{l}\leq1$, we get $Z_{j}(R\sqrt{|1-\chi_{l}C_{ll}%
\chi_{l}|})$ (from definition \ref{cyl}).\par If we can find a symplectic embedding: $\Phi:B(R)\to Z_{j}(R\sqrt{|1-\chi_{l}C_{ll}\chi_{l}|})$,
then \vspace{0.3cm}\\from \textit{Gromov's symplectic non-squeezing theorem} we must have $R\leq R\sqrt{|1-\chi_{l}C_{ll}\chi_{l}|}$ i.e. $2\leq\chi_{l}C_{ll}\chi_{l}\leq 0$. \par We may call the saturation of the previous inequality: $\chi_{l}C_{ll}\chi_{l}=2$, the \textit{Gromov hypersurface}.

\section{Conclusion}
Williamson's symplectic diagonalization for symmetric positive-semi definite matrices has many potential applications in quantum information and computing, where covariance matrices play an important role, as an extension to the readily used Williamson normal form.\par
The uses of Theorem \ref{simul} and its corollary, as well as Proposition \ref{constr} still require further investigation, since they may be repurposed as lemmas for newer results in phase space topology, uncertainty relations and the analysis of Hamiltonian systems at their equilibrium points by using the Hessian as a quadratic function on phase function, for stability analysis.

\section{Appendix}\label{appen}
\subsection{Corollary \ref{semi} for $V=\R^{2n}$}
We state and prove a special case of Theorem \ref{horm1} for $V=\R^{2n}$.

\begin{corollary}
Let $\operatorname{M}$ be a real symmetric positive-semi definite $2n\times2n$
matrix \\whose kernel is a
symplectic subspace of $\mathbb{R}^{2n}$ of dimension $2(n-k)$.\par Then there
exists a matrix $S\in\operatorname*{Sp}(\R^{2n})$ such that $\operatorname{S}%
^{T}\operatorname{M}\operatorname{S}=%
\operatorname{diag}(\Lambda,\Lambda)
$ with\\$\operatorname{\Lambda}=\operatorname*{diag}(\lambda_{1},\dots
,\lambda_{n})$ being a diagonal matrix such that $0<\lambda_{1} \leq\cdots
\leq\lambda_{k}$ and\\ $\lambda_{(k+1)}=\cdots=\lambda_{n}=0$.\par More
explicitly,
\[
\operatorname{S}(x,p)^{T}\circ\operatorname{M}\circ\operatorname{S}%
(x,p)=\sum_{j=1}^{k}\lambda_{j}(x_{j}^{2}+p_{j}^{2})
\]

\end{corollary}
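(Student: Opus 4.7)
The plan is to reduce the matrix statement to the coordinate-free Corollary \ref{semi} by realising the symplectic basis it produces as the columns of an explicit matrix. First I would set up, on $V = \R^{2n}$ equipped with its standard symplectic form $\omega(v,w) = v^T \op{J_{2n}} w$, the quadratic form $f(v) = \langle v, \op{M} v\rangle$ and its associated bilinear form $A(v,w) = \langle v, \op{M} w\rangle$. By Remark \ref{re1}, the induced linear map $\op{F}$ with $A(v,w) = \omega(v, \op{F} w)$ is represented by the matrix $\op{J_{2n}}\op{M}$. Since $\op{J_{2n}}$ is invertible, $Ker(\op{F}) = Ker(\op{M})$, which by hypothesis is a symplectic subspace of dimension $2(n-k)$. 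Hence the hypotheses of Corollary \ref{semi} are satisfied in $\R^{2n}$.

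Next, applying Corollary \ref{semi} produces a symplectic basis $\{x_1, \dots, x_n, y_1, \dots, y_n\}$ of $\R^{2n}$ together with positive reals $\mu_1, \dots, \mu_k$ such that whenever $v = \sum_{j=1}^{n}(s_j x_j + t_j y_j)$, one has $f(v) = \sum_{j=1}^{k}\mu_j (s_j^2 + t_j^2)$. After permuting the indices $1,\dots,k$, I may assume $0 < \mu_1 \leq \mu_2 \leq \cdots \leq \mu_k$ and set $\mu_{k+1} = \cdots = \mu_n = 0$. I would then define $\op{S} \in \mathbb{M}(\R^{2n})$ to be the $2n\times 2n$ real matrix whose first $n$ columns are $x_1, \dots, x_n$ and whose last $n$ columns are $y_1, \dots, y_n$, each written in the standard basis of $\R^{2n}$. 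The symplectic-basis relations $\omega(x_i, x_j) = \omega(y_i, y_j) = 0$ and $\omega(x_i, y_j) = \delta_{ij}$ are exactly the entries of the identity $\op{S}^T \op{J_{2n}} \op{S} = \op{J_{2n}}$, so $\op{S} \in \op{Sp}(\R^{2n})$.

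Finally, for any $z \in \R^{2n}$, writing $\op{S}^{-1} z = (s_1, \dots, s_n, t_1, \dots, t_n)^T$ gives $z = \sum_{j=1}^n (s_j x_j + t_j y_j)$, so
$$
z^T \op{M} z \;=\; f(z) \;=\; \sum_{j=1}^{k} \mu_j (s_j^2 + t_j^2) \;=\; (\op{S}^{-1} z)^T \operatorname{diag}(\Lambda, \Lambda) (\op{S}^{-1} z),
$$
where $\Lambda = \operatorname{diag}(\mu_1, \dots, \mu_k, 0, \dots, 0)$. Since this holds for every $z$, I conclude $\op{S}^T \op{M} \op{S} = \operatorname{diag}(\Lambda, \Lambda)$, which is the claim. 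The main obstacle here is essentially notational rather than conceptual: all the substantive analytic work has already been absorbed into Theorem \ref{horm1} and its Corollary \ref{semi}, and what remains is merely (i) recognising the change-of-basis matrix whose columns are the symplectic basis as an element of $\op{Sp}(\R^{2n})$, and (ii) reordering the symplectic eigenvector pairs so that the nonzero diagonal entries of $\Lambda$ appear in non-decreasing order.
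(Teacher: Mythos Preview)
Your argument is correct, but it takes a genuinely different route from the paper's. You treat the appendix corollary as a straightforward specialisation of the abstract Corollary \ref{semi}: invoke it to obtain a symplectic basis, pack the basis vectors into the columns of $\op{S}$, and read off both $\op{S}^T\op{J_{2n}}\op{S}=\op{J_{2n}}$ and $\op{S}^T\op{M}\op{S}=\operatorname{diag}(\Lambda,\Lambda)$. The paper, by contrast, presents this appendix result as an \emph{independent} matrix-level proof of Corollary \ref{semi} for $V=\R^{2n}$: it never calls on Theorem \ref{horm1} or Corollary \ref{semi}, but instead splits $\R^{2n}$ explicitly as $\ker\op{M}\oplus(\R^{2n}/\ker\op{M})$, applies only the positive-definite Williamson Theorem \ref{will} to the $2k\times 2k$ block $\tilde{\op{M}}$ on the complement, and then assembles $\op{S}$ column-by-column from the symplectic eigenvectors of $\tilde{\op{M}}$ together with a symplectic basis of the kernel, checking by hand (via $\op{J_{2k}}\tilde{\op{M}}$-eigenvector relations) that the mixed blocks of $\op{S}^T\op{J_{2n}}\op{S}$ vanish.

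Your route is shorter and conceptually clean, but it relies on the full semi-definite machinery of Theorem \ref{horm1}. The paper's route is longer and more computational, but it buys independence: it shows that the symplectic-kernel case can be handled using only the classical positive-definite Williamson theorem, which is precisely the point the appendix is trying to make.
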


\begin{proof}
Let $\operatorname{N}=\ker\operatorname{M}$ with dimension $2m=2(n-k)$%
.\newline Since $\operatorname{N}$ is a symplectic subspace, it admits a
symplectic basis $\{\vec{q}_{i},\vec{q}_{(m+i)}\}_{1\leq i\leq(n-k)}$. \\Let
$\operatorname{B}=\mathbb{R}^{2n}/\operatorname{N}$, we can always find a
basis for the $2k$ dimensional subspace $\operatorname{B}$, call it $\{
\vec{b}_{i}\}_{1\leq i\leq2k}$.

Consider the matrix,
\[
\operatorname{W}=%
\begin{bmatrix}
\vert & \vert &  & \vert & \vert & \vert &  & \vert\\
\vec{b}_{1} & \vec{b}_{2} & \cdots & \vec{b}_{2k} & \vec{q}_{1} & \vec{q}_{2}
& \cdots & \vec{q}_{2(n-k)}\\
\vert & \vert &  & \vert & \vert & \vert &  & \vert\\
&  &  &  &  &  &  &
\end{bmatrix}
\]

Note that ${\vec{q}_{i}}\,^{T}\operatorname{M}\vec{q}_{j}=\op{0}_{ij}$ since $\vec
{q}_{i},\vec{q}_{j}\in N$

$\Rightarrow{\vec{q}_{i}}\,^{T}\operatorname{M}\vec{b}_{j}=\op{0}_{ij}$ and 
${\vec{b}_{i}}\,^{T}\operatorname{M}\vec{b}_{j}=\tilde{\operatorname{M}}_{ij}%
>0$\newline

Consequently,
\[
\operatorname{W}^{T}\operatorname{M}\operatorname{W}=
\begin{bmatrix}
\tilde{\operatorname{M}}_{2k\times2k} & \\
& \operatorname{0}_{2m\times2m}%
\end{bmatrix}
\]
Note that the $2k\times2k$ sub-matrix $\tilde{M}$ is real symmetric
positive-definite. \par Therefore, by Williamson's theorem, there exists
$\operatorname{\tilde{S}}\in\operatorname{Sp}(\R^{2k})$ such that \\%
\[
\operatorname{\tilde{S}}^{T}\operatorname{\tilde{M}}\operatorname{\tilde{S}}=
\begin{bmatrix}
\tilde{\operatorname{\Lambda}} & 0\\
0 & \tilde{\operatorname{\Lambda}}%
\end{bmatrix}
\]
where $\tilde{\Lambda}=\operatorname{diag}(\tilde{\lambda}_{1},\dots
,\tilde{\lambda}_{k})$ such that $\pm i\tilde{\lambda}_{j}$ are the
eigenvalues of $\operatorname{J_{2k}}\tilde{\operatorname{M}}$\newline

The columns of $\operatorname{\tilde{S}}$ are the symplectic eigenvectors, let
us arrange them in a way such that $0<\tilde{\lambda}_{1}\leq\dots\leq
\tilde{\lambda}_{k}$, and label them$\{\vec{p}_{i},\vec{p}_{(k+i)}\}_{1\leq
i\leq k}$ (this is a basis for $\operatorname{B}$).\newline

Now consider,
\[
S=
\begin{bmatrix}
\vert &  & \vert & \vert &  & \vert & \vert &  & \vert & \vert &  & \vert\\
\vec{s}_{1} & \cdots & \vec{s}_{k} & \vec{q}_{1} & \cdots & \vec{q}_{(n-k)} &
\vec{s}_{(k+1)} & \cdots & \vec{s}_{2k} & \vec{q}_{(n-k+1)} & \cdots & \vec
{q}_{2(n-k)}\\
\vert &  & \vert & \vert &  & \vert & \vert &  & \vert & \vert &  & \vert\\
&  &  &  &  &  &  &  &  &  &  &
\end{bmatrix}
\]
where $\vec{s_{i}}=%
\begin{bmatrix}
\vert\\
\vec{p}_{i}\\
\vert\\
\operatorname{0}_{m\times1}\\
\end{bmatrix}
$\newline

Note that $\vec{q}_{i}\,^{T}\operatorname{M}\vec{s}_{j}=\op{0}_{ij}$ and
$\vec{s}_{i}\,^{T}\operatorname{M}\vec{s}_{j}=\tilde{\lambda}_{i}\delta_{ij}$,
as guaranteed by Williamson's theorem.\newline

Finally,
\[
\operatorname{S}^{T}\operatorname{M}\operatorname{S}=
\begin{bmatrix}
\Lambda & 0\\
0 & \Lambda
\end{bmatrix}
\]
\newline where $\Lambda=\operatorname{diag}(\lambda_{1},\dots,\lambda_{n})$
with $0<\lambda_{1}=\tilde{\lambda}_{1} \leq\cdots\leq\lambda_{k}%
=\tilde{\lambda}_{k}$ and $\lambda_{(k+1)}=\cdots=\lambda_{n}=0$\newline

Since, $\tilde{\operatorname{S}}$ is a symplectic matrix, its columns form a
symplectic basis such that $\vec{s}_{i}\,^{T}\operatorname{J_{2n}}\vec{s}%
_{j}=\delta_{j,k+i}$\par
Following the proof of Williamson's theorem in
[2], we see that $\vec{s}_{i}=\tilde{\lambda}_{i}^{-1/2}\vec{s^{\prime}}_{i}$\\
where $\{\vec{s^{\prime}}_{j}\pm i\vec{s^{\prime}}_{(k+j)}\}_{1\leq j\leq k}$
are eigenvalues of $\operatorname{K}=-\tilde{\operatorname{M}}^{-1}\op{J_{2k}}$ and
$\operatorname{K}^{-1}=\operatorname{J_{2k}}\tilde{\operatorname{M}}$\newline such
that $\operatorname{K}^{-1}\vec{s^{\prime}}_{j}=\tilde{\lambda}^{-1}%
\vec{s^{\prime}}_{(k+j)}$ and $\operatorname{K}^{-1}\vec{s^{\prime}}%
_{(k+j)}=-\tilde{\lambda}^{-1}\vec{s^{\prime}}_{j}$ for $1\leq j\leq k$.\par
Therefore,%
\[
\vec{q}_{i}\,^{T}\operatorname{J_{2n}}\vec{s}_{j}=\tilde{\lambda}^{1/2}\vec{q}%
_{i}\,^{T}\operatorname{J_{2n}}^{2}\operatorname{M}\vec{s}_{(k+j)}=-\tilde{\lambda
}^{1/2}\vec{q}_{i}\,^{T}\operatorname{M}\vec{s}_{j}=0.
\]
\newline Similarly, $\vec{q}_{i}\,^{T}\op{J_{2n}}\vec{s}_{(k+j)}=0$%
\[
\Rightarrow\operatorname{S}^{T}\operatorname{J_{2n}}\operatorname{S}=%
\begin{bmatrix}
\operatorname{0}_{n\times n} & \operatorname{I}_{n\times n}\\
-\operatorname{I}_{n\times n} & \operatorname{0}_{n\times n}%
\end{bmatrix}
=\operatorname{J_{2n}}%
\]
\par This shows that $\operatorname{S}\in\operatorname{Sp}(\R^{2n})$.
\par And finally,\[%
\begin{bmatrix}
x_{1} & \cdots & x_{n} & p_{1} & \cdots & p_{n}%
\end{bmatrix}
\operatorname{S}^{T}\circ\operatorname{M}\circ\operatorname{S}%
\begin{bmatrix}
x_{1}\\
\vdots\\
x_{n}\\
p_{1}\\
\vdots\\
p_{n}%
\end{bmatrix}
=\sum_{j=1}^{k}\lambda_{j}(x_{j}^{2}+p_{j}^{2})
\]

\end{proof}

\subsection{Examples}

Here, we shall give examples of symplectic eigenvalue computation. \\Note that the blank entries in the matrices below are all zero.

i)
$\operatorname{Q}=%
\begin{bmatrix}
5 & 3\\
3 & 2
\end{bmatrix}
$\par$\operatorname{J_{2}Q}=%
\begin{bmatrix}
3 & 2\\
-5 & -3
\end{bmatrix}
$ has eigenvalues $\pm i$.

Therefore, the symplectic eigenvalue is $1$.

\vspace{0.5cm}
ii) $\operatorname{Q}=%
\begin{bmatrix}
6 & 0 &  & \\
0 & 3 &  & \\
&  & 3 & -1\\
&  & -1 & 1
\end{bmatrix}
$\par$\operatorname{J_{4}Q}=%
\begin{bmatrix}
&  & 3 & -1\\
&  & -1 & 1\\
-6 & 0 &  & \\
0 & -3 &  &
\end{bmatrix}
$ has eigenvalues $\pm i\sqrt{\frac{3}{2}(7+\sqrt{33})}$, $\pm i\sqrt{\frac
{3}{2}(7-\sqrt{33})}$

Therefore, the symplectic eigenvalues are
$\sqrt{\frac{3}{2}(7+\sqrt{33})},\sqrt{\frac{3}{2}(7-\sqrt{33})}$.

\subsection{Evaluation of the Gaussian integral}\label{6.3}
We will demonstrate here how to evaluate the following integral for any real $\alpha>0$
$$I=\int_{-\infty}^{\infty}e^{-\alpha x^2}\; dx$$
Consider the following one parameter family:
$$I(t)=\int_{-\infty}^{\infty}\frac{e^{-\alpha x^2}}{1+(x/t)^2}\;dx$$
After the simple substitution $u=x/t$, we have:
$$I(t)=t\int_{-\infty}^{\infty}\frac{e^{-\alpha u^2 t^2}}{1+u^2}\;du$$
Note that $\displaystyle\lim_{t\to 0}\frac{I(t)}{t}=\int_{-\infty}^{\infty}\frac{1}{1+u^2}\;du=\pi$\;; and $\displaystyle I(\infty)=\lim_{t\to\infty}I(t)=I$.

We now have:
$$\frac{e^{-\alpha t^2}}{t} I(t)=\int_{-\infty}^{\infty}\frac{e^{-\alpha t^2(1+u^2)}}{1+u^2}\;du$$
Differentiating both sides with respect to $t$ gives us
$$\dv{t}(\frac{e^{-\alpha t^2}}{t} I(t))=-2\alpha t e^{-\alpha t^2}\int_{-\infty}^{\infty}e^{-\alpha u^2 t^2}\;du=-2\alpha e^{-\alpha t^2} I(\infty)$$
Now integrating with respect to $t$ from $0$ to $\infty$ and using the symmetry of the integrand on the right hand side 
$$\lim_{t\to\infty}\frac{e^{-\alpha t^2}}{t} I(t)-\lim_{t\to 0}\frac{e^{-\alpha t^2}}{t} I(t)=-\pi=-\alpha I^2(\infty)$$
Finally, 
$$\int_{-\infty}^{\infty}e^{-\alpha x^2}\;dx=\sqrt{\frac{\pi}{\alpha}}$$
\section*{Acknowledgement}
I would like to thank Prof. Maurice de Gosson, Faculty of Mathematics,
Universit\"{a}t Wien, Austria foremost for taking me on for a summer project when all other doors seemed closed to me, for suggesting this interesting project topic, and for helping me write the introduction and background sections.\\ Secondly, I would like to thank my senior thesis advisor, Prof. Reyer Sjamaar, Department of Mathematics, Cornell University, USA for mentoring, keenly advising, and being thorough while helping me tighten my mathematical arguments, especially in the re-derivation of Hörmander's results in the background section and in the main theorem of the results section.\\
Finally, I would like to thank Dr. Hemant Kumar Mishra, Department of Electrical and Computer Engineering, Cornell University, USA for his valuable inputs in the corollaries of the results section and his literature recommendations that helped me see different perspectives with respect to symplectic eigenvalues and eigenvector pairs.\\
Without all the academic experience and mentoring of the aforementioned people, this work (with several potential future applications) would not have come to fruition.

\end{document}